\newtheorem{theorem}{Theorem}[section]
\newtheorem{lemma}[theorem]{Lemma}
\newtheorem{corollary}[theorem]{Corollary}
\theoremstyle{definition}
\newtheorem{example}[theorem]{Example}
\DeclareMathOperator*{\ex}{\mathbb E}
\DeclareMathOperator*{\pr}{\mathbb P}
\DeclareMathOperator{\erf}{erf}
\DeclareRobustCommand{\VANDER}[3]{#2}
\journal{SPL: E-values Special Issue}
\begin{document}

\begin{frontmatter}

\title{A Generalisation of Ville's Inequality to Monotonic Lower Bounds and Thresholds}

\author[1,2]{Wouter M. Koolen} %
\author[3]{Muriel F. Pérez-Ortiz} %
\author[1,4]{Tyron Lardy} %

\affiliation[1]{organization={Centrum Wiskunde \& Informatica},
  city={Amstedam},
  country={The Netherlands}}
\affiliation[2]{organization={University of Twente},
  city={Enschede},
  country={The Netherlands}}
\affiliation[3]{organization={Technical University Eindhoven},
  city={Eindhoven},
  country={The Netherlands}}
\affiliation[4]{organization={Leiden University},
  city={Leiden},
  country={The Netherlands}}

\begin{abstract}
  Essentially all anytime-valid methods hinge on Ville's inequality to gain
  validity across time without incurring a union bound. Ville's inequality is a
  proper generalisation of Markov's inequality. It states that a non-negative
  supermartingale will only ever reach a multiple of its initial value with small
  probability. In the classic rendering
  both the lower bound
  (of zero) and the threshold are constant in time. We generalise both to
  monotonic curves. That is, we bound the probability that a supermartingale
  which remains above a given decreasing curve exceeds a given increasing
  threshold curve. We show our bound is tight by exhibiting a supermartingale
  for which the bound is an equality. Using our generalisation, we derive a clean finite-time version of the law of the iterated logarithm.
\end{abstract}

\begin{keyword}

  Ville's Inequality \sep supermartingales \sep anytime-valid \sep optional stopping \sep Law of Iterated Logarithm

\end{keyword}

\end{frontmatter}

\section{Introduction}
We revisit Ville's inequality. To set the stage, consider a filtered probability space $(\Omega, \mathcal F, (\mathcal F_n)_{n \ge 0}, \pr)$. Recall that an adapted process $(M_n)_{n \ge 0}$ is a \emph{supermartingale} if for all time steps $n \ge 0$
\begin{equation}\label{eq:supm}
  \ex\sbrc*{M_{n+1}}{\mathcal F_n} ~\le~ M_n
  \qquad
  \text{and}
  \qquad
  \ex[\abs{M_n}] ~<~ \infty
  .
\end{equation}
The classic inequality by \citet{Ville:1939} states that a supermartingale which is bounded below will only ever reach a large threshold with small probability. In the customary presentation with zero as the lower bound, it reads

\begin{theorem}[Ville's Inequality]\label{thm:ville}
  Let $(M_n)_{n \ge 0}$ be a non-negative supermartingale. Then for every threshold $C > 0$,
  $
    \pr \set*{
      \exists n \ge 0 : M_n \ge C
    }
    ~\le~
    \frac{\ex \sbr*{M_0}}{C}
    .
  $
\end{theorem}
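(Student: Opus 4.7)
The plan is to reduce the result to Markov's inequality via optional stopping, which is what accommodates the ``$\exists n$'' quantifier. The usual direct application of Markov bounds only $\pr(M_n \ge C)$ for a fixed $n$; the ``anytime'' statement requires collapsing the family of events across all $n$ into a single random variable, and the natural device is a first-passage stopping time.

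First I would fix $C > 0$ and define $\tau := \inf \{ n \ge 0 : M_n \ge C\}$ with the convention $\inf \emptyset = \infty$, so that
$$\{\exists n \ge 0 : M_n \ge C\} \;=\; \{\tau < \infty\}.$$
The key object is the stopped process $M^{\tau}_n := M_{n \wedge \tau}$, which inherits non-negativity from $M$ and is itself a supermartingale by the standard argument (the increment $M^{\tau}_n - M^{\tau}_{n-1}$ equals $(M_n - M_{n-1})$ times the $\mathcal F_{n-1}$-measurable indicator $\mathbf 1\{\tau \ge n\}$). In particular, $\ex[M^{\tau}_n] \le \ex[M^{\tau}_0] = \ex[M_0]$.

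Next, for any finite $n$, on the event $\{\tau \le n\}$ we have $M^{\tau}_n = M_\tau \ge C$, so Markov's inequality applied to the non-negative random variable $M^{\tau}_n$ yields
$$\pr(\tau \le n) \;\le\; \pr(M^{\tau}_n \ge C) \;\le\; \frac{\ex[M^{\tau}_n]}{C} \;\le\; \frac{\ex[M_0]}{C}.$$
Finally I would pass to the limit. The events $\{\tau \le n\}$ are nested and increase to $\{\tau < \infty\}$, so continuity of probability from below gives
$$\pr(\exists n \ge 0 : M_n \ge C) \;=\; \lim_{n \to \infty} \pr(\tau \le n) \;\le\; \frac{\ex[M_0]}{C}.$$

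I do not anticipate a genuine obstacle: every ingredient is classical. The only subtle point is ensuring that the expectation bound on the stopped supermartingale is valid at every finite $n$ without integrability assumptions beyond those in \eqref{eq:supm}; this is handled by the increment computation above, which avoids invoking a full optional stopping theorem. The proof design simply consists in choosing $\tau$ so that Markov's inequality, applied at a truncation $n$, already captures the first-passage event, after which a monotone limit in $n$ delivers the anytime statement.
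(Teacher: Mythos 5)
Your argument is correct and is the standard textbook proof of Ville's inequality: stop at the first hitting time $\tau$, use that the stopped process $M^\tau$ is a non-negative supermartingale (so $\ex[M^\tau_n]\le\ex[M_0]$), apply Markov's inequality at each finite $n$, and let $n\to\infty$ by continuity from below. The paper does not reprove this classical result---it states Theorem~\ref{thm:ville} citing \citet{Ville:1939} and uses it as a black box in the proof of Theorem~\ref{thm:main}---so there is no competing paper-internal proof to compare against; your derivation is a sound self-contained account of the cited fact.
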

This strengthens Markov's inequality by including a search over time steps,
while keeping the threshold $C$ and probability
$\frac{1}{C}$ inversely proportional. This reveals that, as test statistics, supermartingales are immune
to the multiple testing problem that would otherwise require a Bonferroni-type
union bound. The statistical use of Ville's inequality stems from the fact that many events of interest can be encoded as a fixed supermartingale not growing. As such Ville's inequality is a workhorse in the fields of sequential analysis \citep{Wald52}, game-theoretic probability \citep{ShaferV19}, e-values \citep{ramdas2023savi} and many more.

\paragraph{Generalisations}
Multiple extensions of Ville's inequality have been proposed in discrete and continuous time. \cite{ruf2022composite} study a composite
null hypothesis, and they characterise the events that can be expressed as a
uniform (in $\pr$) supermartingale getting big. \cite{wang2023extended} study
the non-integrable case, dropping the right hand side condition from
\eqref{eq:supm}, and obtain a correction for the threshold/probability
relationship. \cite{ramdas2023randomized} develop a randomised sharpening of
Markov's inequality, which also extends to a Ville-type inequality under external stopping.

\paragraph{Our Question}
We investigate an orthogonal direction: what if the lower bound and threshold are not constant in time? That is, suppose supermartingale $(M_n)_{n \ge 0}$ is bounded below by $M_n \ge -f(n)$ for some increasing function $f$, and fix some increasing target threshold $g(n)$. What can we say about $\pr \set*{\exists n \ge 0 : M_n \ge g(n)}$? And what applications could this tackle? %

\paragraph{Ville \& LIL}
We focus on the paradigmatic application of Ville's inequality for deriving anytime-valid concentration inequalities of iterated-logarithm type \citep[for the history, see][Section 5.5]{ShaferV01}. These state that for $X_1,X_2,\ldots$ conditionally sub-Gaussian with mean $\mu$\footnote{That is, $\ex\sbrc{e^{\eta (X_{n+1}-\mu)}}{\mathcal F_n} \le e^{\frac{1}{2} \eta^2}$ for all $\eta \in \reals$ and $n \ge 0$.}, with empirical mean abbreviated by $\hat \mu_n = \frac{1}{n} \sum_{i=1}^n X_i$, for any confidence $\delta \in (0,1]$, we have
\begin{equation}\label{eq:subG.lil}
  \pr \set*{
    \exists n : \hat \mu_n - \mu \ge  (1+o(1)) \sqrt{2\frac{\ln \frac{1}{\delta} + \ln \ln n}{n}}
  }
  ~\le~
  \delta
  .
\end{equation}
\cite{Darling/Robbins:1967a,robbins1970statistical,DeLaPena1999ARatios,balsubramani2015sequential,kaufmann2021mixture} prove versions of~\eqref{eq:subG.lil} by applying Ville's inequality to a mixture martingale, crafted to implement a ``slicing'' weighted union bound over near-exponentially spaced time intervals. Could a more powerful or versatile  version of
Ville's inequality result in a cleaner path to form~\eqref{eq:subG.lil} with
an explicit and interpretable $o(1)$ term?

\paragraph{Partial Proof of Concept}
Our prime example of a load-bearing supermartingale with a diverging lower bound comes from \citet[Section 3.3]{squint}.
For bounded outcomes $X_1, X_2, \ldots \in [-1,+1]$ with conditional mean at most %
$\ex\sbrc*{X_{n+1}}{\mathcal F_n} \le \mu$, they build supermartingales of the form $n \mapsto$
\begin{equation}\label{eq:improper.squintform}
  \int_0^b \frac{\prod_{i=1}^n (1 + \eta (X_i-\mu))
    -1}{\eta} \dif \eta
  \quad
  \text{or}
  \quad
  \int_0^b \frac{
    e^{\eta \sum_{i=1}^n (X_i-\mu) - \frac{n}{2} \eta^2}
    -1}{\eta} \dif \eta
  .
\end{equation}
It is well known \citep[see e.g.][]{ShaferV01} that for $\eta \ge 0$
sufficiently small, the factors $1 + \eta (X_i-\mu)$ and $e^{\eta (X_i-\mu) -
  \frac{1}{2} \eta^2}$ are non-negative and have expectation bounded by one.
Hence taking products over time-steps $i = 1, \ldots, n$, and mixing over $\eta
\in [0,b]$ with positive weights results in a supermartingale. The innovation in
\eqref{eq:improper.squintform} is to mix the \emph{increments} (over the starting value $1$) with an \emph{improper} prior $\frac{1}{\eta} \dif \eta$. The resulting processes~\eqref{eq:improper.squintform} are \emph{bona fide} supermartingales, satisfying both parts of~\eqref{eq:supm}, but they are not bounded below by any constant, but instead by $- 1 - \ln n$ at time $n$. %
If we assume that supermartingale~\eqref{eq:improper.squintform} is not high, a concentration inequality of iterated-logarithm type follows deterministically. But why would it not be high? In the context of individual sequence prediction, a super-martingale can be prevented from growing by the method of defensive forecasting. %
In that context, the key advantage of improper vs standard mixtures is computational: the integral required for defensive forecasting admits a closed form.
Instead, in the context of stochastic concentration inequalities the growth of a non-negative supermartingale is controlled by Ville's inequality. But how to control~\eqref{eq:improper.squintform} and friends? And what are the trade-offs?

\paragraph{Our contributions}
In this article we extend Ville's inequality to handle supermartingales with arbitrary lower bounds and thresholds. We arrive at a tight characterisation of the threshold-crossing probability for any monotonic lower bound and threshold curves. Comparing to \cite{squint}, we find a way to handle sub-Gaussian distributions with unbounded outcomes (for which~\eqref{eq:improper.squintform} would be unbounded below at every $n$). Using our extensions, we prove a cleaner iterated logarithm concentration inequality.

\section{Main Result}
We show the following generalisation of Ville's inequality Theorem~\ref{thm:ville}.

\begin{theorem}\label{thm:main}
  Consider two non-decreasing functions $f$ and $g$ defined on $\set{0,1,\ldots}$ such that $-f(0) < g(0)$.
  \begin{enumerate}[(a)]
  \item \label{it:ubd}
    For any supermartingale $(M_n)_{n \ge 0}$ bounded below by $M_n \ge -f(n)$ for all $n \ge 0$ and with initial expectation $\ex[M_0] \in [-f(0), g(0)]$, we have
\begin{equation}\label{eq:result}
  \pr \set[\big]{
    \exists n \ge 0 : M_n \ge g(n)
  }
  ~\le~
  1
  - \frac{g(0) -  \ex[M_0]}{g(0) + f(0)} \prod_{n=1}^\infty \frac{g(n) + f(n-1)}{g(n) + f(n)}
  .
\end{equation}
\item \label{it:tight}
  For any $m \in [-f(n), g(n)]$ there is a martingale $(M_n)_{n \ge 0}$ bounded below by $M \ge -f$ and with $\ex[M_0] = m$ such that~\eqref{eq:result} holds with equality.
\end{enumerate}
\end{theorem}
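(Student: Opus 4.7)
The plan is to reduce part~(\ref{it:ubd}) to the classical Ville inequality (Theorem~\ref{thm:ville}) by constructing an explicit non-negative supermartingale that dominates the indicator of the crossing event. Concretely, I would introduce the ``test function''
\[
  \hat\phi(n, x) \;:=\; 1 \;-\; \frac{g(n) - x}{g(n) + f(n)} \prod_{k=n+1}^\infty \frac{g(k) + f(k-1)}{g(k) + f(k)}
\]
and study the process $\hat\phi(n, M_n)$. Three easy properties can be read off directly: (i) $\hat\phi(n, M_n) \ge 0$, since each factor of the product lies in $(0,1]$ and $(g(n)-M_n)/(g(n)+f(n)) \le 1$ whenever $M_n \ge -f(n)$; (ii) $\hat\phi(n, M_n) \ge \mathbf{1}\{M_n \ge g(n)\}$, because the subtracted term is non-positive as soon as $M_n \ge g(n)$; and (iii) $\hat\phi(n, M_n)$ is integrable, since it is an affine function of $M_n$ and $\ex[|M_n|] < \infty$.

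The step I expect to be the main hurdle is verifying that $\hat\phi(\cdot, M_\cdot)$ is actually a supermartingale. Noting that $\hat\phi(n+1, y)$ is linear and non-decreasing in $y$, the supermartingale property $\ex[M_{n+1} \mid \mathcal{F}_n] \le M_n$ gives $\ex[\hat\phi(n+1, M_{n+1}) \mid \mathcal{F}_n] \le \hat\phi(n+1, M_n)$. After peeling off one term of the product via the identity $\prod_{k=n+1}^\infty \tfrac{g(k)+f(k-1)}{g(k)+f(k)} = \tfrac{g(n+1)+f(n)}{g(n+1)+f(n+1)} \prod_{k=n+2}^\infty \tfrac{g(k)+f(k-1)}{g(k)+f(k)}$, the desired bound $\hat\phi(n+1, M_n) \le \hat\phi(n, M_n)$ reduces after routine cancellation to the algebraic identity
\[
  (g(n+1) - M_n)(g(n) + f(n)) - (g(n) - M_n)(g(n+1) + f(n)) \;=\; (g(n+1) - g(n))(M_n + f(n)),
\]
and the right-hand side is non-negative because $g$ is non-decreasing and $M_n + f(n) \ge 0$. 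Once this is in hand, Ville's inequality applied to the non-negative supermartingale $\hat\phi(\cdot, M_\cdot)$ combined with the inclusion $\{M_n \ge g(n)\} \subseteq \{\hat\phi(n, M_n) \ge 1\}$ gives $\pr\{\exists n \ge 0 : M_n \ge g(n)\} \le \ex[\hat\phi(0, M_0)]$, and the right-hand side simplifies by linearity of expectation to the claimed bound~\eqref{eq:result}.

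For part~(\ref{it:tight}), I would exhibit the extremal two-state Markov chain suggested by the curves themselves. Write $q_0 = (g(0) - m)/(g(0) + f(0))$ and $p_n = (g(n) + f(n-1))/(g(n) + f(n))$, and define $M_0 = -f(0)$ with probability $q_0$ and $M_0 = g(0)$ otherwise; given $M_n = -f(n)$, let $M_{n+1} = -f(n+1)$ with probability $p_{n+1}$ and $M_{n+1} = g(n+1)$ otherwise; and once the threshold is crossed, absorb at the reached value. A direct computation shows that $\ex[M_0] = m$, that $M$ is a martingale (the two-point transition probabilities are calibrated for exactly this), and that $M_n \ge -f(n)$ always. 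The crossing probability is then the complement of the probability of remaining ``alive'' forever, namely $1 - q_0 \prod_{n=1}^\infty p_n$, which matches the right-hand side of~\eqref{eq:result} exactly.
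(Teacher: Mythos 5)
Your proof is correct and takes essentially the same route as the paper's. Your test function $\hat\phi(n,x)$ is exactly the paper's auxiliary supermartingale $K_n = 1 - \frac{g(n)-M_n}{g(n)+f(n)}s(n)$ with $s(n) = \prod_{k=n+1}^\infty \frac{g(k)+f(k-1)}{g(k)+f(k)}$, and your verification of the supermartingale inequality via the peeled product and the algebraic identity $(g(n+1)-M_n)(g(n)+f(n)) - (g(n)-M_n)(g(n+1)+f(n)) = (g(n+1)-g(n))(M_n+f(n)) \ge 0$ is precisely the calculation the paper carries out. The two-state chain you build for tightness is the paper's ``floor-hugger'' martingale with a randomised start to match the prescribed initial mean, differing only in whether the transition probability is parametrised as the probability of jumping or of staying.
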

We recover Theorem~\ref{thm:ville} for constant
 $f(n) = 0$ and $g(n) = C$ in~\eqref{eq:result}. In Section~\ref{sec:tight} we discuss the
martingale that proves tightness~\eqref{it:tight}, and
then we prove the upper bound~\eqref{it:ubd} in Section~\ref{sec:ubd}. A more
interpretable bound and examples of the use of Theorem \ref{thm:main} are
obtained in Section~\ref{sec:continuous-bound}.

\subsection{Proof of Tightness (Theorem~\ref{thm:main}\eqref{it:tight}) by Floor
  Hugging}\label{sec:tight}
We will first consider the case $M_0 = -f(0)$ deterministically. Then to create a witness martingale for $\ex[M_0] \in (-f(0), g(n)]$, we simply randomise (with weights chosen to control the mean) between starting at either $M_0 = -f(0)$ or $M_0 = g(0)$.

We now define the \emph{floor-hugger} martingale. It starts with $M_0 = -f(0)$.
If it finds itself at time $n$ at value $M_n = -f(n)$, then it either jumps up
to $M_{n+1} = g(n+1)$ or drops down to $M_{n+1} = -f(n+1)$---it hugs the floor. The martingale property forces the jump probability $p_n$ to satisfy
\begin{equation}\label{p.jump}
  -f(n) = p_n g(n+1) - (1-p_n) f(n+1)
  \quad
  \text{i.e.}
  \quad
  p_n
  =
  \frac{f(n+1)-f(n)}{g(n+1) + f(n+1)}.
\end{equation}
Let $s(n) \df \pr \delc{
    \forall t \ge n : M_t = -f(t)
  }{M_n = -f(n)}$ denote the the floor-hugger probability to never hit $M = g$, starting from time $n$ at value $-f(n)$. Then
\begin{align}\label{eq:h}
  s(n)
  &~=~
  \prod_{t=n}^\infty \del*{
    1 - \frac{f(t+1)-f(t)}{g(t+1) + f(t+1)}
  }.
\end{align}
We observe that $s(n) \in [0,1]$. Taking $n=0$, we see that the floor-hugger
martingale witnesses~\eqref{eq:result} with equality.

\subsection{Proof of Upper Bound (Theorem~\ref{thm:main}\eqref{it:ubd}) using Classic Ville}\label{sec:ubd}
Fix any supermartingale $(M_n)_{n \ge 0}$ bounded below by $M_n \ge -f(n)$. Based on it, we construct the auxiliary non-negative supermartingale $(K_n)_{n \ge 0}$ by
\begin{equation}\label{eq:K}
  K_n
  ~\df~
  1 - \frac{g(n) - M_n}{g(n) + f(n)} s(n)
  ,
\end{equation}
where $s(n)$ is the probability, defined in~\eqref{eq:h}, that the floor-hugger martingale never reaches $g$ when starting from value $-f(n)$ at time $n$.
We prove that $(K_n)_{n \ge 0}$ is a non-negative supermartingale, and then
derive our upper bound using Ville's inequalty.

\begin{lemma}
  The process $(K_n)_{n \ge 0}$ in~\eqref{eq:K} is a non-negative supermartingale.
\end{lemma}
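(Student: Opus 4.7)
The plan is to verify three properties of $(K_n)_{n\ge 0}$: non-negativity, integrability, and the one-step supermartingale inequality. Throughout I will lean on two basic facts: (i) $g(n)+f(n) > 0$ for every $n$, since $f$ and $g$ are non-decreasing and $f(0)+g(0) > 0$ by hypothesis; and (ii) $s(n) \in [0,1]$, because \eqref{eq:h} expresses it as a product of factors in $[0,1]$.

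\emph{Non-negativity and integrability.} Since $M_n \ge -f(n)$, the fraction $\tfrac{g(n)-M_n}{g(n)+f(n)}$ is at most $1$. When $M_n \in [-f(n), g(n)]$ it lies in $[0,1]$, so multiplying by $s(n) \in [0,1]$ yields a number in $[0,1]$, giving $K_n \in [0,1]$. When $M_n > g(n)$ the fraction is negative, so $K_n \ge 1$. Integrability follows because $K_n$ is an affine function of $M_n$ with bounded coefficients and $M_n$ is integrable by the supermartingale hypothesis.

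\emph{Supermartingale inequality.} The strategy is to compute $\ex[K_{n+1} \mid \mathcal{F}_n]$ and exploit $\ex[M_{n+1} \mid \mathcal{F}_n] \le M_n$. Because the coefficient $s(n+1)/(g(n+1)+f(n+1))$ multiplying $-\ex[M_{n+1}\mid\mathcal{F}_n]$ in $\ex[K_{n+1}\mid\mathcal{F}_n]$ is non-negative, this replacement preserves the direction of the inequality, so it suffices to establish
\[
\frac{g(n) - M_n}{g(n) + f(n)}\, s(n) \;\le\; \frac{g(n+1) - M_n}{g(n+1) + f(n+1)}\, s(n+1).
\]
Into this I would substitute the one-step recursion $s(n) = \frac{g(n+1) + f(n)}{g(n+1) + f(n+1)}\, s(n+1)$ that falls out of \eqref{eq:h}, clear the (strictly positive) common denominators, and expand. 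Routine algebra should collapse the target to
\[
\bigl(g(n+1) - g(n)\bigr) \bigl(f(n) + M_n\bigr) \;\ge\; 0,
\]
which holds because $g$ is non-decreasing and $M_n \ge -f(n)$.

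The main obstacle, such as it is, is sign bookkeeping rather than any deep inequality: one must confirm that weakening $\ex[M_{n+1}\mid\mathcal{F}_n]$ to $M_n$ goes in the correct direction (it does, thanks to the positive coefficient), that the denominators cleared are strictly positive (guaranteed by $f(0)+g(0) > 0$ and monotonicity), and that the $s$-recursion matches the factor needed to align the two sides. Once these are in place, the inequality reduces to the obvious product of two non-negatives displayed above.
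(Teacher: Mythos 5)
Your proposal is correct and follows essentially the same route as the paper: non-negativity via the observation that $K_n$ is non-decreasing in $M_n \ge -f(n)$ (so $K_n \ge 1 - s(n) \ge 0$), then the supermartingale step by pushing $\ex[M_{n+1}\mid\mathcal F_n] \le M_n$ through the positive coefficient, invoking the $s$-recursion from~\eqref{eq:h}, and finishing with the monotonicity of $g$ together with $M_n \ge -f(n)$ — which is exactly the $(g(n+1)-g(n))(f(n)+M_n) \ge 0$ reduction you identify. The only additions are your explicit note on integrability and a slightly more verbose non-negativity case split, neither of which changes the argument.
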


\begin{proof}
First, $K_n$ is non-decreasing in $M_n \ge -f(n)$, and hence non-negativity follows by
$
  K_n
  \ge
  1-s(n)
  \ge
  0
  $.
  For the supermartingale claim, we have
\begin{align*}
  \ex\sbrc*{K_{n+1}}{\mathcal F_n}
  &~\le~
  1 - \frac{g(n+1) - M_n}{g(n+1) + f(n+1)} s(n+1)
  ~=~
  1 - \frac{g(n+1) - M_n}{g(n+1)+f(n)} s(n)
  \\
  &~\le~
  1 - \frac{g(n) - M_n}{g(n)+f(n)} s(n)
  ~=~
  K_n
  ,
\end{align*}
where we used the definition of $K_n$, the fact that $M_n$ is a supermartingale,
the recurrence
$
  s(n)
  =
  \del*{
    1 - \frac{f(n+1)-f(n)}{g(n+1) + f(n+1)}
  }
  s(n+1)
  $ by~\eqref{eq:h}, and $M_n \ge -f(n)$.
\end{proof}

\begin{corollary}
  The first claim of Theorem~\ref{thm:main}, i.e.\ \eqref{eq:result} holds.
\end{corollary}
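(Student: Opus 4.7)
The plan is to combine the auxiliary process $(K_n)_{n \ge 0}$ constructed in~\eqref{eq:K} with classical Ville's inequality (Theorem~\ref{thm:ville}). Since the preceding lemma already establishes that $(K_n)_{n \ge 0}$ is a non-negative supermartingale, all that remains is to relate the hitting event for $M_n$ to a threshold event for $K_n$ and to unpack $\ex[K_0]$.

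First I would argue that the event of interest is absorbed by a threshold event on $K_n$. Concretely, inspect the definition~\eqref{eq:K}: on the event $M_n \ge g(n)$ the numerator $g(n) - M_n$ is nonpositive, while $g(n)+f(n) > 0$ and $s(n) \in [0,1]$, so $K_n \ge 1$. Hence the inclusion
\begin{equation*}
  \set*{\exists n \ge 0 : M_n \ge g(n)}
  ~\subseteq~
  \set*{\exists n \ge 0 : K_n \ge 1}
\end{equation*}
holds, and monotonicity of $\pr$ lets us bound the left-hand probability by the right-hand one.

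Next I would apply Theorem~\ref{thm:ville} to the non-negative supermartingale $(K_n)_{n \ge 0}$ with threshold $C=1$, obtaining
\begin{equation*}
  \pr \set*{\exists n \ge 0 : K_n \ge 1}
  ~\le~
  \ex[K_0]
  ~=~
  1 - \frac{g(0) - \ex[M_0]}{g(0)+f(0)}\, s(0)
  ,
\end{equation*}
where I use linearity of expectation and the fact that $s(0)$, $f(0)$, $g(0)$ are deterministic constants.

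The last step is a bookkeeping check: rewrite the product defining $s(0)$ in~\eqref{eq:h} by noting that $1 - \tfrac{f(t+1)-f(t)}{g(t+1)+f(t+1)} = \tfrac{g(t+1)+f(t)}{g(t+1)+f(t+1)}$ and re-indexing with $n = t+1$, yielding $s(0) = \prod_{n=1}^\infty \tfrac{g(n)+f(n-1)}{g(n)+f(n)}$. Substituting this back recovers~\eqref{eq:result} exactly. There is no real obstacle here; the only care needed is in the index shift in the product and in observing that the infinite product converges (which is automatic since each factor lies in $(0,1]$ and the partial products are therefore decreasing and bounded below by $0$).
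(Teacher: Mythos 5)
Your proof is correct and follows essentially the same route as the paper: establish the implication $M_n \ge g(n) \Rightarrow K_n \ge 1$, apply classical Ville's inequality to the non-negative supermartingale $(K_n)_{n\ge 0}$ at threshold $1$, and evaluate $\ex[K_0]$. You simply spell out the re-indexing of $s(0)$ that the paper compresses into the phrase ``upon rewriting.''
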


\begin{proof}
As $s(n) \in [0,1]$, $K_n$ is non-decreasing in $M_n$ and hence $M_n \ge g(n)$ implies $K_n \ge 1$. Hence by Ville's inequality (Theorem~\ref{thm:ville}), we find
\begin{align*}
  \pr \set*{
    \exists n \ge 0 : M_n \ge g(n)
  }
  &~\le~
  \pr \set*{
    \exists n \ge 0 : K_n \ge 1
  }
  \\
  &~\le~
  \ex[K_0]
  ~=~
  1
  - \frac{g(0) - \ex[M_0]}{g(0) + f(0)} s(0)
  ,
\end{align*}
which, upon rewriting, yields~\eqref{eq:result}.
\end{proof}

The two key properties used in the proof are that the auxiliary martingale $(K_n)_{n\geq 0}$ is non-negative and that $M_n\geq g(n) \Rightarrow K_n\geq 1$.
However, the definition of $K_n$ might seem unintuitive because it involves $s(n)$, which consists of future values of $f$ and $g$.
Could one design a non-trivial non-negative martingale $(L_n)_{n \ge 0}$ based on the history, $L_n=L_n((M_i)_{i\leq n}, (g(i))_{i\leq n},(f(i))_{i\leq n})$, such that $M_n\geq g(n)\Rightarrow L_n \geq 1$?
For instance, one could consider defining $L_n$ as a scaled and/or translated version of $M_n$ based on past and present values of $f$ and $g$ (e.g.\ \eqref{eq:K} without the factor $s(n)$).
Such a construction turns out to be impossible.
This follows from the fact that we must have $\ex\sbrc*{L_{n+1}}{\mathcal F_n}\leq L_n$ regardless of what the original martingale $(M_n)_{n\geq 0}$ does after time $n$; in particular, this must hold if it does a ``floor-hugger'' type jump (see Section~\ref{sec:tight}) from $M_n$ to $g(n+1)$.
If this jump is succesful, then $M_{n+1}=g(n+1)$, which would imply that $L_{n+1}\geq 1$.
However, the probability of success~\eqref{p.jump} can be made arbitrarily close to one by taking $f(n+1) \to \infty$, which gives $\ex\sbrc*{L_{n+1}}{\mathcal F_n}=1\leq L_n$.
It follows that $L_n\geq 1$ for all $n$, so the implication $M_n\geq g(n) \Rightarrow L_n\geq 1$ is powerless.
The only way to avoid this is for $K_n$ to account for future values of $f$ and $g$.

\section{A Continuous Bound and Examples}\label{sec:continuous-bound}
We present a more interpretable relaxation of~\eqref{eq:result} by considering lower bound $f$ and threshold $g$ defined on real-valued inputs.
\begin{corollary}\label{cor:cont}
  Let $f,g : \reals_+ \to \reals_+$ be increasing with  $f$ differentiable. Under the setup of
  Theorem~\ref{thm:main}\eqref{it:ubd},
\[
  \pr\set*{\exists n \ge 0 : M_n \ge g(n)}
  ~\le~
  1 - \frac{g(0) -  \ex[M_0]}{g(0) + f(0)} \exp \del*{- \int_{0}^\infty \frac{f'(t)}{g(t) + f(t)}  \dif t}
  .
\]
\end{corollary}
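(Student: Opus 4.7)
The plan is to start from the discrete bound supplied by Theorem~\ref{thm:main}\eqref{it:ubd} and show that the infinite product is bounded below by the stated exponential. Writing out~\eqref{eq:result} and comparing with the corollary, it suffices to establish
\[
  \prod_{n=1}^\infty \frac{g(n)+f(n-1)}{g(n)+f(n)}
  ~\ge~
  \exp\left(- \int_0^\infty \frac{f'(t)}{g(t)+f(t)} \, \mathrm d t \right) ,
\]
which after taking $-\log$ on both sides becomes the inequality
\[
  \sum_{n=1}^\infty \ln \frac{g(n)+f(n)}{g(n)+f(n-1)}
  ~\le~
  \int_0^\infty \frac{f'(t)}{g(t)+f(t)} \, \mathrm d t .
\]
So I would reduce the problem to showing this term-by-term against the integral over $[n-1,n]$.

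The crucial step I would perform is a change of variables to rewrite each summand as an integral in $t$. Starting from $\ln \frac{g(n)+f(n)}{g(n)+f(n-1)} = \int_{g(n)+f(n-1)}^{g(n)+f(n)} \frac{\mathrm d u}{u}$ and substituting $u = g(n) + f(t)$, so that $\mathrm d u = f'(t)\,\mathrm d t$, I obtain
\[
  \ln \frac{g(n)+f(n)}{g(n)+f(n-1)}
  ~=~
  \int_{n-1}^n \frac{f'(t)}{g(n)+f(t)} \, \mathrm d t .
\]
Now the comparison against $\int_{n-1}^n \frac{f'(t)}{g(t)+f(t)}\,\mathrm d t$ reduces to a pointwise inequality between the integrands on $[n-1,n]$, and this is where the monotonicity of $g$ kicks in: $g(t) \le g(n)$ for $t\in[n-1,n]$, whence $\frac{f'(t)}{g(n)+f(t)} \le \frac{f'(t)}{g(t)+f(t)}$.

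Integrating this pointwise inequality over $[n-1,n]$ and summing over $n \ge 1$ yields the displayed sum-vs-integral bound; exponentiating the negative of both sides and plugging into~\eqref{eq:result} completes the argument. There is no real obstacle here—the only mildly nontrivial step is spotting the change of variables $u = g(n)+f(t)$ that lets a discrete logarithmic difference be written as an integral against $f'$; once that move is made, monotonicity of $g$ finishes the job.
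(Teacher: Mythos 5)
Your proof is correct and follows essentially the same route as the paper: both arguments replace $g(t)$ by $g(n)$ on the interval $[n-1,n]$ using monotonicity, then identify $\int_{n-1}^n \frac{f'(t)}{g(n)+f(t)}\,\dif t$ with the logarithmic difference $\ln\frac{g(n)+f(n)}{g(n)+f(n-1)}$. The only cosmetic difference is direction—you expand the logarithm into an integral via the substitution $u=g(n)+f(t)$, while the paper evaluates the integral directly—but these are the same calculation.
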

\begin{proof}
  For every $n \ge 1$, as $g$ is increasing,
\[
  - \int_{n-1}^n \frac{f'(t)}{g(t) + f(t)} \dif t
  ~\le~
  - \int_{n-1}^n \frac{f'(t)}{g(n) + f(t)} \dif t
  ~=~
  \ln \del*{
    1 -
    \frac{f(n) - f(n-1)}{g(n) + f(n)}
  }.
  \qedhere
\]
\end{proof}
This bound is convenient in discrete time, and tight in continuous time (by an analogue of the floor hugger from Section~\ref{sec:tight}). Furthermore, its right-hand side is pleasantly invariant under reparametrisations of time by any increasing bijection.
Next we explore convenient choices for $f$ and $g$.
\begin{example}[Quadratic]\label{eq:quadr.g}
  Fix increasing differentiable $f$ with $f(0) = 0$ and $\lim_{n \to \infty} f(n) = \infty$. We propose to study $g(t) = (a f(t) + b)^2-f(t)$, which is increasing when $2 a b \ge 1$. The exponent in Corollary~\ref{cor:cont} evaluates to
  \[
    {- \int_{0}^\infty \frac{f'(t)}{g(t) + f(t)}  \dif t}
    ~=~
  {- \int_{0}^\infty \frac{f'(t)}{(a f(t) + b)^2}  \dif t}
  ~=~
  {
    \left.
      \frac{1}{a(a f(t) + b)}
      \right|_0^\infty
    }
    ~=~
  {
    - \frac{1}{a b}
  }
  .
\]
Equating the threshold crossing probability bound $1-e^{- \frac{1}{a b}}$ to $\delta$ requires picking
$a b = \frac{1}{- \ln (1 - \delta)}$.
\end{example}

The quadratic is in fact a member of a more general construction, which we present in \ref{sec:expcon}. If $g$ vs $f$ grows too slowly, \eqref{eq:result} trivialises to $1$:

\begin{example}[Counterexample]
  For diverging $f(t)$, when we take $g = O(f)$, the upper
  bound in Corollary~\ref{cor:cont} is the trivial $1$: let $t_0 \ge 0$ and $c>0$ be such that $g(t) \le c f(t)$ for all $t \ge t_0$. Then
  \[
    \int_0^\infty \frac{f'(t)}{g(t) + f(t)} \dif t
    ~\ge~
    \int_{t_0}^\infty \frac{f'(t)}{(1+c) f(t)} \dif t
    ~=~
    \left. \ln f(n) \right|_{t_0}^\infty
    ~=~
    \infty
    .
  \]
\end{example}

\section{Finite-time Law of the Iterated Logarithm (LIL)}

Let $X_1, X_2, \dots$ be a sequence of independent sub-Gaussian random variables
with common conditional mean $\mu$, and let $S_n = \sum_{i \leq n} (X_i - \mu)$
be their centered sum. In this sub-Gaussian case we can derive an
iterated-logarithm bound of form~\eqref{eq:subG.lil} using the supermartingale
\begin{equation}
  \label{eq:gaussian_martingale}
  M_n
  =
  \frac{1}{\sqrt{2\pi}}
  \int_{-\infty}^{\infty}
  \bracks{
    \exp\paren{
      \eta S_n
      -
      \frac12\eta^2n
    }
    -1
  }
  \rme^{-\eta^2/2}
  \frac{\rmd\eta}{\abs{\eta}}.
\end{equation}
This choice of $M_n$ is inspired by~\eqref{eq:improper.squintform}, but now
instead of using the improper ``prior''
$\frac{\mathbf{1}\{|\eta| \leq b\}}{|\eta|}\rmd \eta$,
\eqref{eq:gaussian_martingale} uses
$\frac{\rme^{-\eta^2 / 2}}{|\eta|}\rmd \eta$, which eases some Gaussian-integral
computations. Lemma~\ref{lem:gaussian_rewrite} shows that
$M_n \geq - \frac{1}{\sqrt{2\pi}}\ln(1 + n)$. Thus, we can apply
Theorem~\ref{thm:main} with $f(n) = \frac{1}{\sqrt{2\pi}}\ln(1 + n)$. The
following is shown using a specific choice of threshold $g$ in
Theorem~\ref{thm:main} and a linearization of the ensuing implicit bound. A slightly tighter version is in \ref{sec:proof_simpler_lil}.

\begin{lemma}\label{lem:simpler_lil} Let $X_1, X_2, \dots$ be a sequence of
  independent sub-Gaussian random variables
  with common conditional mean $\mu$, and let
  $S_n = \sum_{i \leq n} (X_i - \mu)$. As long as $\delta\leq 3/5$, with
  probability larger than $1 - \delta$ and $\delta' \df -\ln(1 - \delta)$,
  \begin{equation*}
    \forall n :
    \abs{
    \frac{S_n}{\sqrt{n + 1}}
    }
    \leq
    \sqrt{2\ln\bracks{1 +
        \frac{\ln(n + 1) /\sqrt{\pi} + \rme\sqrt{2}}{\delta'} \
    \ln^2\paren{
      \tfrac{\ln(n + 1)}{\sqrt{2\pi}} + \rme
    }
    }
    }
    +
    \frac{1}{\sqrt{2}}.
  \end{equation*}
\end{lemma}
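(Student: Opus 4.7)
The plan is to apply Theorem~\ref{thm:main} (or the continuous Corollary~\ref{cor:cont}) to $(M_n)_{n\ge 0}$ from~\eqref{eq:gaussian_martingale} with the monotone lower bound $f(n) = \ln(1+n)/\sqrt{2\pi}$ supplied by Lemma~\ref{lem:gaussian_rewrite}, for a specific threshold $g$, and then to invert the resulting implicit bound on $|Z_n| \df |S_n|/\sqrt{n+1}$ into the closed form of the lemma.

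First I would massage~\eqref{eq:gaussian_martingale} into a shape that exposes $Z_n$ directly. Completing the square $\eta S_n - \eta^2 n/2 - \eta^2/2 = -\tfrac{n+1}{2}(\eta - S_n/(n+1))^2 + S_n^2/(2(n+1))$, collapsing the symmetric $\pm\eta$ contributions via $\cosh$, and substituting $\eta = u/\sqrt{n+1}$ yields
\[
  M_n + f(n)
  ~=~
  \sqrt{\tfrac{2}{\pi}}\,I(Z_n),
  \qquad
  I(z) \df \int_0^\infty \frac{e^{-u^2/2}\,(\cosh(uz) - 1)}{u}\,\mathrm{d} u,
\]
where $I$ is even, strictly increasing in $|z|$, has $I(0)=0$ (which recovers Lemma~\ref{lem:gaussian_rewrite}), and satisfies $I(z) \sim \sqrt{\pi/2}\,e^{z^2/2}/z$ as $|z|\to\infty$. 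Consequently the crossing event $\{M_n \ge g(n)\}$ coincides exactly with $\{I(|Z_n|) \ge \sqrt{\pi/2}(g(n)+f(n))\}$, so controlling the probability of the former controls a purely deterministic deviation bound on $|Z_n|$.

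Second, I would pick $g$ in the quadratic template of Example~\ref{eq:quadr.g}, namely $g(n) + f(n) = (a f(n)+b)^2$. The integral in Corollary~\ref{cor:cont} then collapses to $1/(ab)$, and fixing $ab = 1/\delta'$ pins the failure probability to $\delta$ (using that $f(0)=0$ and $\ex[M_0]=0$). Only the product $ab$ is constrained at this stage; the free ratio $a/b$ remains to be chosen and will be used to tune the final bound.

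The core step is then the \emph{linearisation}: translating the implicit condition $I(|Z_n|) \ge \sqrt{\pi/2}(a f(n)+b)^2$ into the explicit shape $|Z_n| \le \sqrt{2\ln(1+A_n)} + 1/\sqrt{2}$ claimed in the lemma. For this I would derive a sharp lower bound $L(z) \le I(z)$ whose inverse admits exactly that closed form, built from the clean identity $I'(z) = \sqrt{\pi/2}\,e^{z^2/2}\,\erf(z/\sqrt{2})$ via an integration by parts that converts the Laplace $1/z$ pre-factor in $I(z)\sim\sqrt{\pi/2}e^{z^2/2}/z$ into the $\ln^2$ correction that appears inside $A_n$; the $1/\sqrt{2}$ slack and the factors $\mathrm{e}\sqrt{2}, \mathrm{e}$ come from absorbing a second-order tail in that step. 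A final choice of the free ratio $a/b$ (possibly via a countable mixture or a single near-optimal pick followed by an elementary union bound) matches the resulting implicit $A_n$ to the advertised expression $\bigl(\ln(n+1)/\sqrt{\pi}+\mathrm{e}\sqrt{2}\bigr)\ln^2\bigl(\ln(n+1)/\sqrt{2\pi}+\mathrm{e}\bigr)/\delta'$, with the mild restriction $\delta \le 3/5$ entering as a monotonicity condition in the last relaxation. This linearisation is the only delicate calculation: the lower bound on $I$ must be simultaneously tight enough to produce the clean additive slack $1/\sqrt{2}$, invertible in closed form, and flexible enough to absorb the $1/z$ pre-factor into the $\ln^2$ correction; everything else follows directly from Theorem~\ref{thm:main} and routine manipulations.
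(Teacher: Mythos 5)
Your overall blueprint matches the paper's: rewrite $M_n$ from~\eqref{eq:gaussian_martingale} as $I(S_n/\sqrt{n+1}) - f(n)$ with $f(n)=\ln(1+n)/\sqrt{2\pi}$ (Lemma~\ref{lem:gaussian_rewrite}), pick a threshold $g$ and apply Theorem~\ref{thm:main}, and then invert the resulting implicit bound on $I$ using a convex lower bound and a tangent line (Lemmas~\ref{lem:implicit_lil}--\ref{lem:invert_ell}). Your $\cosh$-form of $I$ is a correct equivalent representation of the paper's $I(x)=\int_0^x \mathrm{e}^{u^2/2}\erf(u/\sqrt{2})\,\mathrm{d}u$.

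The choice of $g$, however, is a genuine gap. You propose the quadratic template of Example~\ref{eq:quadr.g}, i.e.\ $g(n)+f(n)=(af(n)+b)^2$ with $ab=1/\delta'$. With $f(n)\sim \ln n/\sqrt{2\pi}$ that makes the implicit threshold $\tau_n:=g(n)+f(n)$ grow like $(\ln n)^2$. Feeding $\tau_n$ into the inversion step ($I(z)$ and its minorant $\ell(z)$ both behave like $\mathrm{e}^{z^2/2}/z$ for large $z$, so the inverse is $\sqrt{2\ln\tau}(1+o(1))$) gives $|S_n|/\sqrt{n+1}$ bounded by roughly $\sqrt{2\cdot 2\ln\ln n}=2\sqrt{\ln\ln n}$, a factor $\sqrt{2}$ worse than the LIL constant. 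The lemma you are trying to prove has the correct constant precisely because the quantity inside its logarithm, $\bigl(\ln(n+1)/\sqrt{\pi}+\mathrm{e}\sqrt{2}\bigr)\ln^2\!\bigl(\ln(n+1)/\sqrt{2\pi}+\mathrm{e}\bigr)/\delta'$, grows like $\ln n\,(\ln\ln n)^2$, not $(\ln n)^2$. The paper reaches this with the \emph{fat-tailed} exp-concave dampener of Example~\ref{ex:exp_concave}, taking $h(x)=\delta'/\ln(\mathrm{e}+x)$ so that $g(n)+f(n)=-1/h'(f(n))=\tfrac{1}{\delta'}(\mathrm{e}+f(n))\ln^2(\mathrm{e}+f(n))$ (Lemma~\ref{lem:g_is_good}); that is a structurally different threshold, not a reparametrisation of yours. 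Your fallback (tune $a/b$, mix countably many quadratics, or add a union bound) cannot close the gap: no choice of $a,b$ changes the $(\ln n)^2$ growth, a countable mixture is bounded below by its fastest-growing member, and a discretised union bound reintroduces exactly the Darling--Robbins slicing the paper's construction is designed to avoid. As written, the proposal would prove a weaker inequality than the one stated.

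A secondary remark: your sketch of the inversion (``integration by parts to convert the $1/z$ pre-factor into the $\ln^2$ correction'') conflates two sources. The $\ln^2$ in the lemma comes entirely from the choice of $g$, not from inverting $I$. The paper's inversion is elementary: the $\mathrm{sech}$-integral representation of $I$ plus a pointwise Gaussian lower bound yields the explicit convex minorant $\ell(x)=\sqrt{(\mathrm{e}^{x^2/2}-1)^3/(x^2\mathrm{e}^{x^2/2})}$ (Lemma~\ref{lem:lower_bound_I}), and a single tangent at $\xi=\sqrt{2\ln(1+\sqrt{2}\tau)}$ produces the additive slack $R(\tau)\le 1/\sqrt{2}$ (Lemmas~\ref{lem:invert_I} and~\ref{lem:invert_ell}). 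The restriction $\delta\le 3/5$ comes from requiring $\delta'\le 1$ in Lemma~\ref{lem:g_is_good}, not from the final relaxation as you suggest.
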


\section{Discussion}
We extended Ville's inequality to supermartingales that are bounded below by a time-decreasing lower bound, and to time-increasing thresholds. We then put the result to use in deriving concentration inequalities of iterated-logarithm type. Let us discuss what we learned on the way.

\paragraph{Is our extended Ville's inequality strictly more general?} Here our study reveals that the answer is, surprisingly, \emph{no}. This becomes apparent in the proof of the upper bound in Section~\ref{sec:ubd}, which operates by applying Ville's inequality to an auxiliary non-negative supermartingale. Indeed if one is able to encode an event of interest directly in the form of that auxiliary supermartingale, classic Ville suffices.

\paragraph{Is our extension more user friendly?} We strongly believe that the auxiliary supermartingale from Section~\ref{sec:ubd} is not intuitive or natural, and we are not aware of any result that can be rendered as having guessed it. We certainly discovered it last, when trying to prove that the Floor Hugger martingale from Section~\ref{sec:tight} is the worst case. This suggests that possible consequences of our extension, even though accessible in principle, were not practically available. In that sense we claim our extension is more empowering.

\paragraph{Is our finite-time LIL better?}
To put our result to use, we develop a sub-Gaussian concentration inequality of iterated logarithm type. If we juxtapose the consequence of applying Ville to proper and improper mixture approaches, and ignore constants, we need to unpack respectively
\[
  \int_{-b}^b \frac{e^{\eta S_n - \frac{1}{2} n \eta^2}}{\abs{\eta} (\ln \abs{\eta})^{1+c}} \dif \eta ~\le~ \frac{1}{\delta}
  \quad
  \text{or}
  \quad
  \int_{-b}^b \frac{e^{\eta S_n - \frac{1}{2} n \eta^2} - 1}{\abs{\eta}} \dif \eta ~\le~ \frac{\ln n}{\delta} \del*{\ln \ln n}^{1+c}
  .
\]
Any lower bound of either integral gives a LIL bound. The comparison then boils down to which form is easier to bound tightly. We show in Lemma~\ref{lem:gaussian_rewrite} that the right-hand inequality rewrites to a fixed function of the deviation $\frac{S_n}{\sqrt{n+1}}$ being below the threshold  $\frac{\ln n}{\delta} (\ln \ln n)^{1+c}$. Due to this separation, obtaining a LIL bound reduces to inverting that function. A similar modularity is not present in the left-hand inequality, and its tight analysis is considerably more involved \citep{robbins1970statistical,squint}.

\paragraph{One-sided LIL bounds}
The proper mixture technique easily adapts to delivering one-sided LIL bounds, simply by mixing only over $\eta \ge 0$. A one-sided improper mixture analogue of~\eqref{eq:gaussian_martingale} would be well-defined. But it would not be bounded below, as can be seen by taking $S_n \to -\infty$, which takes the exponential to zero. This is a curious downside that invites further investigation.

\paragraph{Multiple Testing / Betting with Subsidies}
Theorem~\ref{thm:main} reveals that the worst-case nonnegative martingale only makes a single all-or-nothing attempt to cross the threshold. This is how Ville generalises Markov's inequality without any overhead. The story changes fundamentally with a receding lower bound $-f$. Now, every time margin is created by growing $f$, the worst-case martingale makes a new independent attempt to hit the then-current threshold $g$. These attempts all contribute to the overall probability of ever reaching $g$.
In that light, non-constant lower bounds correspond to multiple testing scenarios.
Our relation between $f,g$ and the probability of ever reaching $g$ quantifies the exact multiple testing correction required.

Yet another equivalent perspective is to consider the non-negative semimartingale $M_n + f(n)$ consisting of a stochastic supermartingale part $M_n$ and a deterministic increasing ``subsidy'' part $f(n)$. In betting language, the subsidies allow for engaging in multiple all-or-nothing bets, and consequently reduce the surprise upon sizeable gain. Our main result quantifies exactly the relation between surprise and gain $g$ as a function of the subsidy $f$.

\DeclareRobustCommand{\VANDER}[3]{#3}

\bibliographystyle{abbrvnat}
\bibliography{bib}

\DeclareRobustCommand{\VANDER}[3]{#2}

\appendix

\section{Exp-concave Dampener Example}\label{sec:expcon}
In this section we present an extension of Example~\ref{eq:quadr.g} that allows for $g(n)$ to grow slower than quadratic in $f(n)$ (but still super-linear).

\begin{example}[Exp-concave]\label{ex:exp_concave}
  Fix increasing differentiable $f$ with $f(0) = 0$ and
  $\lim_{n \to \infty} f(t) = \infty$, and let $h$ be $1$-exp-concave---the
  function $\xi\mapsto \rme^{-h(\xi)}$ is concave---and twice differentiable such
  that $\lim_{\xi \to \infty} h(\xi) = 0$ (the quadratic above corresponds to
  $h(\xi) = \frac{1}{a (a \xi+b)}$). We propose to investigate
  $
    g(t) ~=~ \frac{- 1}{h'(f(t))} - f(t)
    .
  $
  Now $g(t)$ being non-decreasing translates to
  $
    0 ~\le~
    g'(t) ~=~
    \del*{
      \frac{h''(f(t)) }{h'(f(t))^2} - 1
    } f'(t)
    ,
  $
  and given that $f'(t) \ge 0$, it remains to check $
  h'(f(t))^2
  \le
  h''(f(t))$,
which holds by the exp-concavity assumption.
Then, as $h(f(\infty)) = h(\infty) = 0$,
\[
  {- \int_{0}^\infty \frac{f'(t)}{g(t) + f(t)}  \dif t}
  ~=~
  {\int_{0}^\infty h'(f(t)) f'(t) \dif t}
   ~=~
  {\left.  h(f(t)) \right|_{0}^\infty}
  ~=~
  - h(0)
  .
\]
So to ascertain threshold crossing probability bounded by $\delta$, it suffices
to pick $1 - e^{-h(0)} = \delta$, i.e.\ $h(0) = -\ln(1 - \delta)$. Interesting
choices for $h$ include the polynomial $h(\xi) = \frac{(a \xi+b)^{1-c}}{a (c-1)}$
for $c>1$ (the quadratic being $c=2$) and $a,b > 0$ which is exp-concave
provided $a c \ge b^{1-c}$. They also include the fat-tailed
$h(\xi) = \frac{1}{\ln(a \xi + b)^{c-1}}$ for $a>0$, $b \ge e$, $c>1$, which is
exp-concave. The latter ensures $g(n) \sim f(n) (\ln f(n))^c$ as $n\to \infty$.
\end{example}

\section{Proof of~Lemma~\ref{lem:simpler_lil}}
\label{sec:proof_simpler_lil}

In this section, we show Lemma~\ref{lem:explicit_lil}, which directly
implies Lemma~\ref{lem:simpler_lil}.
\begin{lemma}\label{lem:explicit_lil} Under the set up of Lemma~\ref{lem:implicit_lil}, as long as
  $\delta\leq 3/5$, with probability larger than $1 - \delta$,
  \begin{equation*}
    \forall n :
    \abs{
    \frac{S_n}{\sqrt{n + 1}}
    }
    \leq
    \sqrt{2\ln\bracks{1 +
        \frac{\ln(n + 1) /\sqrt{\pi} + \rme\sqrt{2}}{\delta'} \
    \ln^2\paren{
      \frac{\ln(n + 1)}{\sqrt{2\pi}} + \rme
    }
    }
    }
    +
    R(n),
  \end{equation*}
  where $\delta' = -\ln(1 - \delta)$, the function
  $\ln_{(\delta)}(n) = 1 + \frac{\ln(1 + n)}{\delta'\sqrt{2\pi}}$, and the
  residual quantity
  $R(n) = \frac{
     \tau
    \paren{
      1 -
      \sqrt{\frac{\tau(n) }{ \tau(n) + 2\sqrt{2}} \ \frac{1}{\ln(1 + \sqrt{2}\tau(n))}}
    }
  }{
    \frac{1}{2\ln(1 + \sqrt{2}\tau(n))}
    \sqbrack{
      (1 + \sqrt{2}\tau(n))\{2\ln(1 + \sqrt{2}\tau(n)) - 1\}
      +
      \ln(1 + \sqrt{2}\tau(n))
      +
      1
    }
    \sqrt{\frac{\tau(n)}{\tau + 2\sqrt{2}}}
  }$ with $\tau(n) =  \frac{\ln(n + 1) /\sqrt{2\pi} + \rme}{\delta'} \
    \ln^2\paren{
      \frac{\ln(n + 1)}{\sqrt{2\pi}} + \rme
    }$.
  Furthermore, the remainder term $R = R(n)$ is harmless: $R \leq 1 / \sqrt{2}$,
  the function $n\mapsto R(n)$ is increasing, and
  $R(n) \sim \frac{1}{\sqrt{2}}\paren{1 - \frac{1}{\sqrt{2\ln\ln n}}}$ as $n\to \infty$.
\end{lemma}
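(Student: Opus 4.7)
The plan is to apply our extended Ville inequality (Theorem~\ref{thm:main}, via Corollary~\ref{cor:cont}) to the supermartingale $(M_n)_{n\ge 0}$ of~\eqref{eq:gaussian_martingale}, and then to invert the resulting implicit bound on $|S_n|/\sqrt{n+1}$. As preparation one rewrites $M_n$ as a deterministic function of the standardized deviation $z_n \df S_n/\sqrt{n+1}$: collapsing the two Gaussian factors in the integrand of~\eqref{eq:gaussian_martingale} gives $\exp(\eta S_n - \tfrac12\eta^2(n+1))$, and the Frullani identity $\int_0^\infty(e^{-au^2} - e^{-bu^2})\dif u/u = \tfrac12\ln(b/a)$ after the rescaling $\eta = u/\sqrt{n+1}$ then yields
\[
  M_n \;=\; -f(n) + \Psi(z_n),\quad f(n) \df \frac{\ln(1+n)}{\sqrt{2\pi}},\quad \Psi(z) \df \frac{2}{\sqrt{2\pi}}\int_0^\infty \frac{e^{-u^2/2}(\cosh(uz)-1)}{u}\dif u.
\]
The function $\Psi$ is even, convex, non-negative, increasing on $\reals_+$, and satisfies $\Psi(z) \sim e^{z^2/2}/z$ as $|z|\to\infty$; this is the content of Lemma~\ref{lem:gaussian_rewrite}. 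In particular $M_n \ge -f(n)$, and the event $\{M_n \ge g(n)\}$ coincides with the deterministic event $\{\Psi(z_n) \ge f(n)+g(n)\}$.

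Next I instantiate the threshold $g$ from the fat-tailed branch of Example~\ref{ex:exp_concave}, taking $h(\xi) = 1/\ln(a\xi+b)$ (the case $c=2$). This is designed so that $f(n)+g(n) = -1/h'(f(n))$ grows like $f(n)\ln^2 f(n)/\delta'$ --- precisely the structure inside the logarithm in the stated bound --- while the crossing probability produced by Corollary~\ref{cor:cont} equals $1-e^{-h(0)}$. Pinning $h(0) = \delta'$ (equivalently $\ln b = 1/\delta'$) fixes the constants and yields overall crossing probability at most $\delta$. Consequently, with probability $\ge 1-\delta$ we obtain the implicit bound $\Psi(z_n) < f(n)+g(n)$ uniformly in $n$.

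The final step, and the main obstacle, is to convert this implicit inequality into an explicit bound on $|z_n|$. The leading term $\sqrt{2\ln[1+\cdots]}$ in the statement is dictated by the tail asymptotic $\Psi(z)\sim e^{z^2/2}/z$, so the delicate task is to construct a non-asymptotic lower envelope $\Psi(z)\ge \Psi_0(z)$ which (i) is tight enough to recover the correct LIL constant, (ii) can be inverted in closed form, and (iii) pushes the leftover slack into a residual bounded by $1/\sqrt{2}$. My plan is to use the closed form $\Psi'(z) = e^{z^2/2}\erf(z/\sqrt{2})$ (obtained from the symmetric Gaussian identity $\int_{-\infty}^\infty e^{-u^2/2+uz}\dif u = \sqrt{2\pi}\,e^{z^2/2}$) together with a convenient monotone lower bound on $\erf$ to produce $\Psi_0$, and then to solve the resulting quadratic-in-logarithm equation. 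This is the content of the more precise Lemma~\ref{lem:explicit_lil} in the appendix; Lemma~\ref{lem:simpler_lil} then follows by the uniform-in-$n$ relaxation $R(n) \le 1/\sqrt{2}$, verified via elementary monotonicity of the explicit formula for $R$. The hypothesis $\delta \le 3/5$ is the natural range in which the linearization step remains faithful.
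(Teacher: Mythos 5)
Your high-level plan --- rewrite $M_n$ as a deterministic function of $z_n = S_n/\sqrt{n+1}$, apply the extended Ville's inequality with $f(n) = \ln(1+n)/\sqrt{2\pi}$ and a fat-tailed threshold from Example~\ref{ex:exp_concave}, then invert --- is indeed the paper's strategy. Your $\Psi$ is the same function as the paper's $I$ (you correctly note $\Psi' = e^{z^2/2}\erf(z/\sqrt{2})$), and the split into a $z_n$-only piece plus a deterministic $-f(n)$ is exactly Lemma~\ref{lem:gaussian_rewrite}.

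However, there are two concrete gaps. First, your normalisation of $h$ does not reproduce the lemma's constants. The paper takes $h(\xi) = \delta'/\ln(\rme+\xi)$, pinning $h(0)=\delta'$ via the scalar in the numerator, which gives $g(n)+f(n) = u(f(n)) = \frac{1}{\delta'}(\rme + f(n))\ln^2(\rme + f(n)) = \tau(n)$ exactly as in the statement. You instead take $h(\xi) = 1/\ln(a\xi + b)$ with $\ln b = 1/\delta'$ and leave $a$ free; this yields $g(n)+f(n) = (a f(n) + \rme^{1/\delta'})\ln^2(a f(n) + \rme^{1/\delta'})/a$, which does not equal $\tau(n)$ for any fixed $a$ (and is substantially larger for small $\delta$). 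So the implicit bound you would obtain is not~\eqref{eq:implicit_lil}, and the explicit bound would have different, worse constants than stated.

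Second, and more seriously, the inversion step --- which is where essentially all the technical work in proving the exact form of $R(n)$ lies --- is only gestured at, and the plan differs from the paper's. You propose to lower-bound $\erf$ inside $\Psi'(z)=\rme^{z^2/2}\erf(z/\sqrt2)$, integrate, and ``solve the resulting quadratic-in-logarithm equation.'' But any such $\Psi_0$ will still contain a product $\rme^{z^2/2}\cdot(\text{polynomial in }z)$, whose inverse is not closed-form, not a quadratic in a logarithm. The paper instead proves the lower bound $I(x)\ge \ell(x)=\sqrt{(\rme^{x^2/2}-1)^3/(x^2\rme^{x^2/2})}$ via a nontrivial $\mathrm{sech}^2$ rewrite of the double Gaussian integral (Lemma~\ref{lem:lower_bound_I}), and then exploits convexity of $\ell$ to invert via the tangent line at $\xi = \sqrt{2\ln(1+\sqrt2\tau)}$ (Lemma~\ref{lem:invert_ell}); the residual $R(\tau)$ in the statement is precisely the tangent-line slack $(\tau-\ell(\xi))/\ell'(\xi)$, which your route would not produce. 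Finally, a small misattribution: the restriction $\delta\le 3/5$ does not come from ``the linearization step'' --- it comes from needing $\delta'=-\ln(1-\delta)\le 1$ so that Lemma~\ref{lem:g_is_good} certifies that $g$ is increasing and $g\ge f$.
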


If one considers a version $M_n^\kappa$ of $M_n$ with ``prior''
$\frac{\rme^{-\eta^2 \kappa^2/ 2}}{|\eta|}\rmd \eta$ for some $\kappa > 0$, the time-uniform high-probability bound $\forall n:S_n \leq F_\delta(n)$ from $M_n$ scales to $\forall n: S_n  \leq \kappa F_\delta(n / \kappa^2)$ from $M_n^\kappa$.

\bigskip
\noindent
Throughout this section, we will make use of the following function
\begin{equation}\label{eq:I}
I(x) := \int_0^{x} \rme^{u^2 / 2}\erf\paren{\frac{u}{\sqrt{2}}} \rmd u
\end{equation}
We begin with the following lemma (its proof can be found in
Section~\ref{sec:proof-lem:gaussian_rewrite}), which presents a rewrite of $M_n$
from~\eqref{eq:gaussian_martingale}.
\begin{lemma}\label{lem:gaussian_rewrite}
  The martingale $(M_n)_n$ from~\eqref{eq:gaussian_martingale} satisfies
  \begin{equation*}
    M_n
    =
    I\paren{\frac{S_n}{\sqrt{n + 1}}}
    -\frac{1}{\sqrt{2\pi}}\ln(1 + n)
  \end{equation*}
  where $I$ from~\eqref{eq:I} is nonnegative and consequently $M_n \geq -\frac{1}{\sqrt{2\pi}}\ln(1 + n) $.
\end{lemma}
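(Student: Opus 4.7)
The plan is to write $M_n = \frac{1}{\sqrt{2\pi}}\phi_n(S_n)$, where $\phi_n$ is the parametric integral obtained by folding the factor $e^{-\eta^2/2}$ into the bracket:
\[
  \phi_n(s)
  \;=\;
  \int_{-\infty}^{\infty}
    \frac{e^{\eta s - \frac{1}{2}\eta^2(n+1)} - e^{-\eta^2/2}}{|\eta|}
  \, d\eta.
\]
Neither term alone is integrable near $\eta = 0$, but the numerator equals $s\eta + O(\eta^2)$ there, so $\phi_n$ is a bona fide Lebesgue integral. I would then establish the identity $\phi_n(s) = \sqrt{2\pi}\, I(s/\sqrt{n+1}) - \ln(n+1)$ by determining $\phi_n'$ in closed form and adding back the base value $\phi_n(0)$.

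Differentiating in $s$ kills the $|\eta|^{-1}$ singularity and leaves $\phi_n'(s) = \int_{-\infty}^\infty \mathrm{sgn}(\eta)\, e^{\eta s - \frac{1}{2}\eta^2(n+1)}\, d\eta$. Splitting at $\eta = 0$, completing the square $\eta s - \frac{1}{2}\eta^2(n+1) = -\frac{n+1}{2}(\eta - s/(n+1))^2 + s^2/(2(n+1))$, and recognising the resulting half-line Gaussian integrals as shifted error functions, I obtain $\phi_n'(s) = e^{s^2/(2(n+1))}\sqrt{2\pi/(n+1)}\, \erf(s/\sqrt{2(n+1)})$. The substitution $u = s/\sqrt{n+1}$ then transforms $\int_0^{S_n}\phi_n'(s)\,ds$ into exactly $\sqrt{2\pi}\, I(S_n/\sqrt{n+1})$. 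For the base value, the change of variable $u = \eta^2/2$ recasts $\phi_n(0) = \int_{-\infty}^\infty(e^{-\eta^2(n+1)/2} - e^{-\eta^2/2})/|\eta|\, d\eta$ as a classical Frullani integral $\int_0^\infty (e^{-(n+1)u} - e^{-u})/u\, du = -\ln(n+1)$. Dividing by $\sqrt{2\pi}$ assembles the claimed formula.

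Non-negativity of $I$ is a short aside: $u \mapsto e^{u^2/2}\erf(u/\sqrt{2})$ is odd and has the same sign as $u$, so $I$ is even and $I(x)\ge 0$ for every real $x$ (with $I(0) = 0$); combined with the identity this yields $M_n \ge -\ln(1+n)/\sqrt{2\pi}$. The only genuine obstacle is the analytic bookkeeping: justifying differentiation under the integral sign and the cancellation at $\eta = 0$ throughout. Both follow by dominated convergence using a Gaussian envelope for the integrand and its $s$-derivative that is uniform in $s$ on compact sets, together with the first-order vanishing of the numerator at the origin. I would compartmentalise these checks so they do not interrupt the algebraic flow.
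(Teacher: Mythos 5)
Your proof is correct and takes a genuinely different route from the paper's. The paper applies Feynman's trick to an \emph{artificial} parameter $x$ that scales the Gaussian prior width, defining $v(x)$ with weight $\rme^{-\eta^2 x^2/2}$ and computing $M_n = v(1) = -\int_1^\infty v'(x)\,\rmd x$; the vanishing of $v$ at $x=\infty$ absorbs both the logarithmic term and the $I$-term into a single integral, so no separate base-value computation is needed. You instead differentiate with respect to the \emph{natural} parameter $s$ (the value of $S_n$), which also kills the $|\eta|^{-1}$ singularity: the derivative $\phi_n'(s) = \int \mathrm{sgn}(\eta)\,\rme^{\eta s - \frac12\eta^2(n+1)}\,\rmd\eta$ reduces to a half-line Gaussian integral, completing the square gives $\sqrt{2\pi/(n+1)}\,\rme^{s^2/(2(n+1))}\erf(s/\sqrt{2(n+1)})$, and the substitution $u = s/\sqrt{n+1}$ produces $\sqrt{2\pi}\,I(S_n/\sqrt{n+1})$. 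The price of this more intrinsic choice of parameter is that you must supply the base value $\phi_n(0)$ separately; your reduction to the Frullani integral $\int_0^\infty (\rme^{-(n+1)u} - \rme^{-u})/u\,\rmd u = -\ln(n+1)$ handles this cleanly. Both arguments hinge on the same phenomenon (differentiation under the integral removes the improper singularity at $\eta=0$), and your dominated-convergence remarks suffice to justify the interchange, so the two proofs are of comparable rigor and length; yours is arguably the more self-contained because it never introduces a variable absent from the original problem, at the modest cost of one additional classical integral.
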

This result decomposes $M_n$ into a contribution only dependent on the scaled
and smoothed empirical mean $\frac{S_n}{\sqrt{n+1}}$ and a contribution only
dependent on the sample size $n$. With a choice of $g$, we obtain an implicit
concentration inequality in terms of the function $I$ (see the proof in Section~\ref{sec:proof-implicit_lil}).
\begin{lemma}\label{lem:implicit_lil}
  Let $X_1, X_2, \dots$ be an i.i.d. sequence of standard normal random
  variables. Then, as long as $\delta \leq 3/5$, with probability larger than
  $1 - \delta$,
  \begin{equation}\label{eq:implicit_lil}
    \forall n : I\del*{\frac{S_n}{\sqrt{n + 1}}}
    ~\leq~
    \frac{\ln(n + 1) /\sqrt{2\pi} + \rme}{\delta'} \
    \ln^2\paren{
      \frac{\ln(n + 1)}{\sqrt{2\pi}} + \rme
    },
  \end{equation}
  where $I$ is defined in~\eqref{eq:I} and
  $\delta' = -\ln(1 - \delta)$.
\end{lemma}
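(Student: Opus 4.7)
The plan is to recast the claim as a threshold-crossing event for the supermartingale $(M_n)_n$ from~\eqref{eq:gaussian_martingale} and then apply Corollary~\ref{cor:cont} with a threshold drawn from the exp-concave dampener construction of Example~\ref{ex:exp_concave}. By Lemma~\ref{lem:gaussian_rewrite}, $M_n + f(n) = I(S_n/\sqrt{n+1})$ where $f(n) := \ln(1+n)/\sqrt{2\pi}$ is the lower envelope on $M_n$. If we write the target right-hand side of~\eqref{eq:implicit_lil} as $G(n)$, then $\{I(S_n/\sqrt{n+1}) \ge G(n)\}$ is literally the event $\{M_n \ge g(n)\}$ with $g(n) := G(n) - f(n)$. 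It therefore suffices to bound the threshold-crossing probability for $(M_n)_n$ against $g$.

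Since $S_0 = 0$, $I(0) = 0$, and $f(0) = 0$, we have $\ex[M_0] = 0$ and the prefactor $\tfrac{g(0) - \ex[M_0]}{g(0) + f(0)}$ in Corollary~\ref{cor:cont} collapses to $1$. The bound reduces to
\[
  \pr\set*{\exists n : M_n \ge g(n)} ~\le~ 1 - \exp\left(- \int_0^\infty \frac{f'(t)}{G(t)}\, \dif t\right),
\]
so the design task is to pick $G$ such that (i) $G(n)$ equals the RHS of~\eqref{eq:implicit_lil}, (ii) $g = G - f$ is non-decreasing, and (iii) the integral in the exponent equals $\delta' = -\ln(1-\delta)$.

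I would instantiate the exp-concave dampener construction with $h(\xi) := \delta'/\ln(\xi + \rme)$. A direct computation gives $G(\xi) := -1/h'(\xi) = (\xi + \rme)\ln^2(\xi + \rme)/\delta'$, whence $G(f(n))$ matches the RHS of~\eqref{eq:implicit_lil} on the nose. Moreover $h(\infty) = 0$ and $h(0) = \delta'$, so Example~\ref{ex:exp_concave}'s identity $\int_0^\infty f'(t)/G(t)\, \dif t = h(0)$ gives the threshold crossing probability bound $1 - \rme^{-\delta'} = \delta$.

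The only genuine obligation left is exp-concavity, i.e.\ $h'(\xi)^2 \le h''(\xi)$, which is precisely what makes $g$ non-decreasing. Differentiating $h$ twice and simplifying, this inequality is equivalent to $\delta' \le \ln(\xi+\rme)\bigl(\ln(\xi+\rme) + 2\bigr)$, minimised at $\xi = 0$ where it becomes $\delta' \le 3$. Under the hypothesis $\delta \le 3/5$ we have $\delta' = -\ln(2/5) < 1$, so the condition holds with considerable slack. The main obstacle, were Example~\ref{ex:exp_concave} not already in hand, would be guessing the precise $h$ that simultaneously calibrates the integral to $\delta'$ and yields the target logarithmic-squared scaling of $G$; given the example, the proof is essentially a matter of matching coefficients and a short exp-concavity check.
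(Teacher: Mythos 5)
Your proposal is correct and follows the same route as the paper. You identify the same martingale rewrite (Lemma~\ref{lem:gaussian_rewrite}), the same $f$, the same dampener $h(\xi)=\delta'/\ln(\xi+\rme)$, and invoke the same machinery (Theorem~\ref{thm:main} via Example~\ref{ex:exp_concave}); the paper packages the $g\ge 0$ and monotonicity checks into Lemma~\ref{lem:g_is_good}, whereas you verify the exp-concavity inequality $h'(\xi)^2\le h''(\xi)$ directly. Your direct check ($\delta'\le\ln(\xi+\rme)(\ln(\xi+\rme)+2)$, minimised at $\xi=0$ to give $\delta'\le 3$) is actually a little sharper than the paper's sufficient condition $\delta'\le 1$, though both comfortably cover the stated hypothesis $\delta\le 3/5$. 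One small point worth making explicit: besides monotonicity of $g$ you also need $g(\cdot)\ge 0$ for Corollary~\ref{cor:cont} to apply; this follows from $G(\xi)-\xi = (\xi+\rme)\ln^2(\xi+\rme)/\delta' - \xi \ge \rme > 0$ for $\delta'\le 3$ (the function $\xi\mapsto G(\xi)-\xi$ has value $\rme/\delta'$ at $0$ and nonnegative derivative $\ln(\xi+\rme)(\ln(\xi+\rme)+2)/\delta' - 1$ under your exp-concavity bound), so the conclusion stands.
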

The right hand side of~\eqref{eq:implicit_lil} is of order $\ln(n) + o(\ln(n))$
as $n \to \infty$, and, as we will see, $I(x) \gtrsim \rme^{x^2/2} / x$ as
$x\to\infty$. Thus, \eqref{eq:implicit_lil} implies that, uniformly over time
and with probability larger than $1 - \delta$, we have that
$S_n /\sqrt{n} \leq \sqrt{2(\ln(\ln(n)/ \delta))}(1 + o(1))$ as $n\to\infty$.

The last step of the proof, is to invert the implicit bound in
Lemma~\ref{lem:implicit_lil} to obtain an explicit bound for $S_n$. The
following lemma provides a tool to do so (its proof is in
Section~\ref{lem:invert_I}).
\begin{lemma}\label{lem:invert_I}
  Let $I$ be as in Lemma~\ref{lem:implicit_lil} and let $\tau > 0$. Then, if
  $I(x) \leq \tau$, then
  \begin{equation*}
    |x| \leq
    \sqrt{2\ln(1 + \sqrt{2}\tau)}
    +
    R(\tau)
  \end{equation*}
  with $R(\tau) = \frac{
     \tau
    \paren{
      1 -
      \sqrt{\frac{\tau }{ \tau + 2\sqrt{2}} \ \frac{1}{\ln(1 + \sqrt{2}\tau)}}
    }
  }{
    \frac{1}{2\ln(1 + \sqrt{2}\tau)}
    \sqbrack{
      (1 + \sqrt{2}\tau)\{2\ln(1 + \sqrt{2}\tau) - 1\}
      +
      \ln(1 + \sqrt{2}\tau)
      +
      1
    }
    \sqrt{\frac{\tau}{\tau + 2\sqrt{2}}}
  }$, which satisfies the following:   $R(\tau) \leq 1/\sqrt{2}$, and  $\tau
  \mapsto R(\tau)$ is increasing, and $R(\tau) \sim \frac{1}{\sqrt{2}}\paren{1
  - \frac{1}{\sqrt{\ln\tau}}}$ as $\tau\to\infty$.
\end{lemma}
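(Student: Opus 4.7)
The plan is to invert the inequality $I(x) \le \tau$ in two stages: first a closed-form leading approximation that matches the $\sqrt{2\ln(1+\sqrt{2}\tau)}$ term, and then an explicit additive correction $R(\tau)$ from one Newton-type step. Set $L := \ln(1+\sqrt{2}\tau)$ and let $x_0 := \sqrt{2L}$, so that $\rme^{x_0^2/2} = 1+\sqrt{2}\tau$; this $x_0$ is the exact inverse, at $\tau$, of the crude surrogate $\phi(x) := (\rme^{x^2/2}-1)/\sqrt{2}$ which matches $I$ only to leading order. Since the integrand $u \mapsto \rme^{u^2/2}\erf(u/\sqrt{2})$ is odd, $I$ is even, so it suffices to bound $x \ge 0$. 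A quick computation gives $I''(x) = x\rme^{x^2/2}\erf(x/\sqrt{2}) + \sqrt{2/\pi} > 0$ on $[0,\infty)$, so $I$ is strictly convex and strictly increasing there, and the claim $|x| \le x_0 + R(\tau)$ reduces to showing $I(x_0 + R(\tau)) \ge \tau$.

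For this, since $I'$ is non-decreasing on $[0, \infty)$,
\[
I(x_0 + R) \;=\; I(x_0) + \int_{x_0}^{x_0 + R} I'(u)\,\rmd u \;\ge\; I(x_0) + R\,I'(x_0)
\]
for any $R \ge 0$. Combined with monotonicity of $I$, the goal reduces to producing an $R(\tau)$ that is at least the Newton ratio $(\tau - I(x_0))/I'(x_0)$; so it suffices to upper bound the numerator and lower bound the denominator in closed form.

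For the denominator, $I'(x_0) = (1+\sqrt{2}\tau)\erf(\sqrt{L})$, and the factor $\sqrt{\tau/(\tau + 2\sqrt{2})}$ appearing in $R(\tau)$ traces back to a suitable elementary lower bound on $\erf(\sqrt{L})$ (for instance derived from $\erf(z) \ge 1 - \rme^{-z^2}$ or a closely related squared variant). For the numerator, I would use $\tau = \phi(x_0)$ to rewrite
\[
\tau - I(x_0) \;=\; \int_0^{x_0} \rme^{u^2/2}\bigl[u/\sqrt{2} - \erf(u/\sqrt{2})\bigr]\,\rmd u,
\]
apply the same $\erf$ bound, and reduce matters to lower bounding the Dawson-type integral $F(x_0) := \int_0^{x_0} \rme^{u^2/2}\,\rmd u$. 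The latter admits the elementary closed-form lower bound $F(x) \ge (\rme^{x^2/2} - 1)/x$ for $x > 0$, proved by comparing derivatives with equality at $x = 0$. Plugging both bounds into the Newton ratio yields the displayed expression for $R(\tau)$ after algebra.

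Finally, the three stated properties of $R$ are direct consequences of its explicit form. The ceiling $R(\tau) \le 1/\sqrt{2}$ follows from the crude estimates $(1 + \sqrt{2}\tau)\erf(\sqrt{L}) \ge \sqrt{2}\tau$ and $\tau - I(x_0) \le \tau$; monotonicity in $\tau$ is verified by differentiation of the closed form; and the asymptotic $R(\tau) \sim \tfrac{1}{\sqrt{2}}(1 - 1/\sqrt{\ln \tau})$ drops out of substituting $L = \ln(1+\sqrt{2}\tau) \to \infty$ and $\tau/(\tau + 2\sqrt{2}) \to 1$, using that the tightest bound on $\tau - I(x_0)$ scales like $\tau(1 - 1/\sqrt{L})$. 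The hard part is the simultaneous sharpness of the two lower bounds, on $\erf$ and on $F$: each must be tight enough to preserve the leading term $\sqrt{2\ln(1+\sqrt{2}\tau)}$ while remaining explicit, and the subsequent algebra to collect everything into the displayed form of $R(\tau)$ is lengthy though mechanical.
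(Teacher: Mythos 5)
Your plan shares the paper's high-level skeleton---a tangent-line (Newton) step around $x_0=\sqrt{2\ln(1+\sqrt{2}\tau)}$---but the execution diverges in a way that leaves a real gap. The paper does \emph{not} run Newton on $I$ and then approximate $\tau - I(x_0)$ and $I'(x_0)$ separately, as you propose. Instead it first establishes the closed-form convex lower bound
\[
I(x)\;\ge\;\ell(x):=\sqrt{\tfrac{(\rme^{x^2/2}-1)^{3}}{x^{2}\rme^{x^{2}/2}}},
\]
(Lemma~\ref{lem:lower_bound_I}), and then takes one \emph{exact} Newton step for $\ell$ at $\xi=\sqrt{2\ln(1+\sqrt{2}\tau)}$; the displayed $R(\tau)$ is precisely $(\tau-\ell(\xi))/\ell'(\xi)$ computed in closed form (Lemma~\ref{lem:invert_ell}). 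The function $\phi(x)=(\rme^{x^2/2}-1)/\sqrt{2}$ you introduce is the small-$x$ asymptote of $\ell$, not an approximation of $I$; note $I(x)\sim x^2/\sqrt{2\pi}$ as $x\to 0$ and $I(x)\sim\rme^{x^2/2}/x$ as $x\to\infty$, neither of which matches $\phi$. This matters because the entire form of $R(\tau)$ is dictated by $\ell$, which your argument never constructs.

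The concrete gap is this: you propose to upper-bound $\tau-I(x_0)$ and lower-bound $I'(x_0)$ using ``a suitable elementary lower bound on $\erf$'' and a Dawson-integral bound, and assert that ``plugging both bounds into the Newton ratio yields the displayed expression for $R(\tau)$ after algebra.'' That last claim is not substantiated and, as far as I can see, not true: tracking your suggested estimates (e.g.\ $\erf(z)\ge 1-\rme^{-z^2}$ and $F(x)\ge(\rme^{x^2/2}-1)/x$) through the ratio gives a different expression from the $R(\tau)$ in the lemma statement, with no guarantee it is smaller. The heavy lifting in the paper is Lemma~\ref{lem:lower_bound_I}, proved via the non-obvious identity $I(x)=\sqrt{2/\pi}\int_0^\infty[\rme^{\frac12 x^2\mathrm{sech}^2 w}-1]\,\rmd w$ followed by a Laplace-type Gaussian lower bound; nothing in your sketch reproduces this or an equivalent of it. There is also an unaddressed sign issue: $\tau-I(x_0)$ is negative for small $\tau$ (since $I(x)>\phi(x)$ near the origin), in which regime ``upper-bound numerator, lower-bound denominator'' is not a valid way to upper-bound the ratio unless you first certify your numerator estimate is non-negative. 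Until you either derive $\ell$ (or an equivalent closed-form convex minorant of $I$) and carry out the algebra, or exhibit explicit $\erf$ and Dawson bounds and verify they collapse to exactly the stated $R(\tau)$, the proof is incomplete.
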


With these results at hand, we can proof Lemma~\ref{lem:explicit_lil}.
\begin{proof}[Proof of Lemma~\ref{lem:explicit_lil}]
  Using the Lemma~\ref{lem:implicit_lil}, we obtain a uniform bound for
  $I(S_n / \sqrt{n + 1})$. Using Lemma~\ref{lem:invert_I}, we obtain the
  explicit bound for $S_n$ that is claimed.
\end{proof}

The next sections are respectively dedicated to the proofs of
Lemma~\ref{lem:gaussian_rewrite}, Lemma~\ref{lem:implicit_lil}, , and
Lemma~\ref{lem:invert_I}.

\subsection{Proof of Lemma~\ref{lem:gaussian_rewrite}}
\label{sec:proof-lem:gaussian_rewrite}
\begin{proof}[Proof of Lemma~\ref{lem:gaussian_rewrite}]
We use Feynman's trick.
It begins by introducing an extraneous factor into the integral that defines $M_n$.
Define
\begin{equation*}
  v(x)
  :=
  \frac{1}{\sqrt{2\pi}}
  \int_{-\infty}^\infty\bracks{
    \exp
    \paren{
      \eta  S_n
      -
      \frac{1}{2}\eta^2 n
    }
    - 1
  }
  \rme^{-\eta^2 x^2 / 2}
  \frac{\rmd \eta}{|\eta|}
\end{equation*}
and obtain
\begin{equation}\label{eq:vderiv}
  v'(x)
  ~=~
  \frac{\sqrt{\frac{2}{\pi }} n}{n x+x^3}-\frac{S_n x \rme^{\frac{S_n^2}{2 \left(n+x^2\right)}} \erf\left(\frac{S_n}{\sqrt{2(n+x^2)}}\right)}{\left(n+x^2\right)^{3/2}}.
\end{equation}
Now we compute $M_n=v(1) = - \int_1^\infty v'(x) \dif x$. The first term on the
right hand side of~\eqref{eq:vderiv} contributes
$-\frac{\ln (n+1)}{\sqrt{2 \pi }}$ to the integral. For the second term, we
need to compute
\[
  \int_1^\infty
  \frac{S_n x \rme^{\frac{S_n^2}{2 \left(n+x^2\right)}} \erf\left(\frac{S_n}{\sqrt{2(n+x^2)}}\right)}{\left(n+x^2\right)^{3/2}}
  \dif x.
\]
After reparameterising by
$u(x) = \frac{S_n}{\sqrt{n + x^2}}$ with
$u'(x) = - \frac{S_n x}{(x^2 + n)^{3 / 2}}$, this can be written as
\[
  \int_{u(1)}^{u(\infty)}
  -\rme^{\frac{u^2}{2}} \erf\left(\frac{u}{\sqrt{2}}\right)
  \dif u
  ~=~
  \int_{0}^{\frac{S_n}{\sqrt{n + 1}}}
  \rme^{\frac{u^2}{2}} \erf\left(\frac{u}{\sqrt{2}}\right)
  \dif u.
\]
All in all, this gives
\[
  M_n
  ~=~
  - \frac{\ln (n+1)}{\sqrt{2 \pi }}
  + \int_{0}^{\frac{S_n}{\sqrt{n + 1}}}
  \rme^{\frac{u^2}{2}} \erf\left(\frac{u}{\sqrt{2}}\right)
  \dif u
\]
\end{proof}

\subsection{Proof of Lemma~\ref{lem:implicit_lil}}
\label{sec:proof-implicit_lil}

Lemma~\ref{lem:gaussian_rewrite} allows us to use Theorem~\ref{thm:main} with
lower bound $f(x) = \ln(1 + x) / \sqrt{2\pi}$. All that is left is to choose a threshold function $g(x)$. The next lemma shows such a choice (its proof is
in~\ref{sec:proof-lem:g_is_good}).
\begin{lemma}\label{lem:g_is_good}
  Let $f(x) = \frac{1}{\sqrt{2\pi}} \ln(1 + x)$, let $u$ be the function
  $u(x) = \frac{1}{d}\paren{\rme + x} \ln\paren{\rme + x}^2, $ and let
  $g(x) := u(f(x)) - f(x)$. Then, the following two hold.
  \begin{enumerate}
  \item If $0< d \leq 1$, then $g \geq f$, and the function $g$ is increasing.
  \item $\int_0^\infty \frac{f'(x)}{f(x) + g(x)}\rmd x = d$
  \end{enumerate}
\end{lemma}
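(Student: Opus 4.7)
My plan is to work throughout with the identity $f(x) + g(x) = u(f(x))$, which falls straight out of the definition $g(x) = u(f(x)) - f(x)$. This is the key structural observation: the denominator in the integral of part 2 simplifies to a function purely of $f(x)$, making the integral amenable to a change of variable, and the monotonicity claim in part 1 can be reduced to monotonicity of $u$ alone.

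For part 2, I would substitute $y = f(x)$, so that $dy = f'(x)\, dx$ and the limits become $0$ and $\infty$ (since $f(0) = 0$ and $f(\infty) = \infty$). This turns the integral into $\int_0^\infty \frac{dy}{u(y)} = d \int_0^\infty \frac{dy}{(e+y)\ln(e+y)^2}$. A second substitution $w = \ln(e + y)$, $dw = dy/(e+y)$, collapses this to $d \int_1^\infty w^{-2}\, dw = d$, which is what is claimed.

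For part 1, I would handle the two properties separately. To show $g$ is increasing, I would differentiate: $g'(x) = (u'(f(x)) - 1) f'(x)$. Since $f'(x) > 0$, it suffices to check $u'(y) \ge 1$ for all $y \ge 0$. Computing, $u'(y) = \tfrac{1}{d}\ln(e+y)\bigl[\ln(e+y) + 2\bigr]$; since $\ln(e+y) \ge 1$ for $y \ge 0$, this is bounded below by $3/d \ge 3 > 1$ when $d \le 1$. For $g \ge f$, I note that this is equivalent to $u(f(x)) \ge 2 f(x)$, i.e.\ $u(y) \ge 2y$ for $y \ge 0$. Since $d \le 1$, it suffices to show $(e+y)\ln(e+y)^2 \ge 2 y$. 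For $y \le e$ this follows from $e + y \ge 2y$ together with $\ln(e+y)^2 \ge 1$. For $y > e$, I would bound $(e+y)\ln(e+y)^2 \ge y\,\ln(2e)^2 > 2y$, since $\ln(2e)^2 = (1 + \ln 2)^2 > 2$.

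I do not anticipate a serious obstacle: both claims amount to short elementary estimates once the identity $f + g = u \circ f$ is in hand. The only mildly delicate point is the case split in the $g \ge f$ argument, which could alternatively be replaced by a uniform bound such as $\ln(e+y)^2 + y\bigl(\ln(e+y)^2 - 2\bigr)/e \ge 0$ via a single convexity-style check; I would pick whichever version reads more cleanly in the final write-up.
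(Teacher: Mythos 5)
Your proof is correct and follows essentially the same route as the paper: both start from the identity $f+g = u\circ f$, reduce part~2 to $\int_0^\infty \frac{dy}{u(y)}$ via the substitution $y=f(x)$ (the paper phrases this through the $h$-function device of its exp-concave example), and establish part~1 by elementary bounds on $u$. The only difference is cosmetic: the paper uses convexity of $u$ with its tangent at $0$, namely $u(y) \ge e/d + 3y/d$, to settle both $g\ge f$ and monotonicity in one stroke, whereas you verify $u'(y)\ge 3$ directly and argue $u(y)\ge 2y$ by a split at $y=e$; both variants are sound.
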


We can now proceed to prove Lemma~\ref{lem:implicit_lil}.
\begin{proof}[Proof of Lemma~\ref{lem:implicit_lil}]
  By Lemma~\ref{lem:gaussian_rewrite}, the martingale $M_n$ can be rewritten as
  $M_n = I(S_n / \sqrt{n + 1}) - \ln(1 + n)/\sqrt{2\pi}$ with
  $I$ defined in~\eqref{eq:I}. Use
  Theorem~\ref{thm:main} with $f(x) = \ln(1 + x)/ \sqrt{2\pi}$ and
  $g(x) = u(f(x)) - f(x)$ with
  $u(x) = \frac{1}{\delta'} \paren{  e+x }  \ln\paren{e +
     x }^2$ where $\delta' = -\ln(1 - \delta)$. By
  Lemma~\ref{lem:g_is_good} (with $d = \delta'$) this is a valid choice of $g$
  as long as $-\ln(1 - \delta) \leq 1$, that is, as long as
  $\delta \leq 1 - \rme^{-1}\approx 0.63$. Theorem~\ref{thm:main} then implies
  that, with probability smaller than
  $1 - \exp\paren{-\int_0^\infty\frac{f'(u)}{f(u) + g(u)}}\rmd u = 1 -
  \exp(\delta') = \delta$,
  \begin{equation*}
    \exists n:
    I(S_n / \sqrt{n + 1})
    \geq
    u(f(n)).
  \end{equation*}
  This is equivalent to the claim that we set ourselves to prove.
\end{proof}

\subsubsection{Proof of Lemma~\ref{lem:g_is_good}}
\label{sec:proof-lem:g_is_good}

\begin{proof}[Proof of Lemma~\ref{lem:g_is_good}]
  The function $u$ is convex, $u(0) = \rme / d$, and $u'(0) = 3/c$. Then, the
  function $u$ is bounded by its tangent at zero, that is
  $u(x) \geq \rme /d 3x / d$, which is larger than $\rme + 3x$ because
  $0 < d\leq 1$. Then $g(x) = u(f(x)) - f(x) \geq 2f(x) + \rme \geq f(x)$, which
  proves the first claim. The second claim---that $g$ is increasing---follows
  because $x\mapsto f(x)$ is increasing and $x\mapsto u(x) - x$ is convex, and,
  as shown before, $u'(0) - 1 \geq 0$. The third claim follows because the $u$
  was chosen so that $u = -\frac{1}{h'}$ with $h(x) = \frac{d}{\ln(\rme + x)}$
  (see Example~\ref{ex:exp_concave}), and
  $\int_0^\infty \frac{f'(x)}{f(x) + g(x)}\rmd x =
  \int_{0}^{\infty}\frac{f'(x)}{u(f(x))}\rmd x = -\int_0^\infty h'(x)\rmd x =
  h(0) -h(\infty) = d$.
\end{proof}

\subsection{Proof of Lemma~\ref{lem:invert_I}}
\label{sec:proof_invert_I}

We would like, for a given threshold $\tau$, to find a $\xi$ so that $x\leq \xi$
anytime that $I(x) \leq \tau$. Ideally, $\xi = I^{-1}(\tau)$, but we do not have
access to an explicit form for $I^{-1}$. We begin by given a lower bound on $I$
that is more amenable to explicit computation.
\begin{lemma}\label{lem:lower_bound_I}
  With $I$ defined in~\eqref{eq:I},
  $I(x)\geq \ell(x) := \sqrt{ \frac{ (\rme^{x^2 / 2} - 1)^{3} }{ x^2\rme^{x^2 /
        2} } }$.
\end{lemma}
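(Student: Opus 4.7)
The plan is to replace $\erf$ inside $I$ by a simpler lower bound, simplify so the result can be compared with $\ell$ directly, and then verify the comparison by differentiation.

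First, apply the elementary inequality $\erf(z)\ge\sqrt{1-\rme^{-z^2}}$ (a Chu-type bound, provable by noting $z\mapsto \erf^2(z)+\rme^{-z^2}$ equals $1$ at $z=0$ and $z\to\infty$ and is unimodal) at $z = u/\sqrt{2}$. Using the identity $\rme^{u^2/2}\sqrt{1-\rme^{-u^2/2}} = \sqrt{\rme^{u^2}-\rme^{u^2/2}} = \rme^{u^2/4}\sqrt{\rme^{u^2/2}-1}$, this yields $I(x) \ge J(x)$ with
\[
  J(x) := \int_0^x \rme^{u^2/4}\sqrt{\rme^{u^2/2}-1}\,\rmd u.
\]
It then suffices to prove $J(x) \ge \ell(x)$.

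Rewrite the target for $x>0$ as $\ell(x) = (\rme^{x^2/2}-1)^{3/2}/(x\rme^{x^2/4})$ and note $J(0)=\ell(0)=0$, so it is enough to establish $J'(x) \ge \ell'(x)$ on $(0,\infty)$ and integrate. Direct computation gives $J'(x) = \rme^{x^2/4}\sqrt{\rme^{x^2/2}-1}$, and logarithmic differentiation of $\ell$ yields
\[
  \ell'(x) = \frac{\sqrt{\rme^{x^2/2}-1}}{2x^2\rme^{x^2/4}}\bigl[2(x^2-1)\rme^{x^2/2}+x^2+2\bigr].
\]
Cancelling the common factor $\sqrt{\rme^{x^2/2}-1}>0$ and multiplying through by $2x^2\rme^{x^2/4}$, the inequality $J' \ge \ell'$ collapses to the scalar claim $2\rme^{x^2/2} \ge x^2+2$.

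This scalar inequality is immediate: $\phi(x) := 2\rme^{x^2/2}-x^2-2$ satisfies $\phi(0)=0$ and $\phi'(x) = 2x(\rme^{x^2/2}-1)\ge 0$ for $x\ge 0$.

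The only genuinely fiddly step will be the algebraic bookkeeping that reduces $J' \ge \ell'$ to $2\rme^{x^2/2} \ge x^2+2$; everything else is a one-line computation. Notably, this route avoids any direct lower bound on the integral defining $I$ beyond Chu's pointwise bound on $\erf$, sidestepping the delicate Cauchy--Schwarz splittings one might otherwise attempt (which I found either lose the $\erf$ factor too crudely or produce intractable denominators).
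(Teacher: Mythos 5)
Your proof is correct, but it takes a genuinely different and arguably cleaner route than the paper's. The paper first rewrites $I(x)$ as $\sqrt{2/\pi}\int_0^\infty\bigl[\rme^{\frac12 x^2\mathrm{sech}^2(w)}-1\bigr]\,\rmd w$ via a sequence of two-dimensional changes of variables, then lower-bounds that integrand using the concavity of $t\mapsto\ln(\rme^{t}-1)$ around $t=x^2/2$, and finally integrates the resulting Gaussian to land on $\ell(x)$. You instead apply the pointwise bound $\erf(z)\ge\sqrt{1-\rme^{-z^2}}$ to the original integrand, producing $J(x)=\int_0^x\rme^{u^2/4}\sqrt{\rme^{u^2/2}-1}\,\rmd u$, and then verify $J\ge\ell$ by comparing derivatives, which collapses to $2\rme^{x^2/2}\ge x^2+2$. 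I checked the algebra: your $\ell'$ agrees with the one the paper records in the proof of its inversion lemma, the cancellation to the scalar inequality is right, and the scalar inequality is immediate. One small remark: both you and the paper implicitly work with $x\ge 0$; it is worth a half-sentence noting that $I$ and $\ell$ are both even so this suffices. Stylistically, your route is more elementary (no $\mathrm{sech}^2$ representation, no log-concavity step) and reduces everything to a one-line Taylor fact, at the price of invoking the erf bound and a derivative comparison in place of the paper's Gaussian-integral closed form; the paper's route is more structural and would naturally produce the expression for $\ell$ even if one did not know it in advance, whereas yours is best understood as verification of a given $\ell$.
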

The function $\ell$ from Lemma~\ref{lem:lower_bound_I} is convex, and any tangent is a
lower bound. Note additionally that
$\ell(x)\sim (\rme^{x^2 / 2} - 1) / \sqrt{2}$ as $x\downarrow 0$ so that
$\ell^{-1}(\tau) \sim \sqrt{2\ln(1 + \sqrt{2}\tau)}$ as $\tau\downarrow 0$. This
is behind the next lemma (its proof is in Section~\ref{sec:proof_invert_ell}).
\begin{lemma}
  \label{lem:invert_ell}
  Assume that $\tau > 0$ and $x$ is such that $\ell(x) \leq \tau$, where $\ell$
  is as in Lemma~\ref{lem:lower_bound_I}. Then, for any $\xi > 0$
  \begin{equation*}
    |x| \leq \xi + \frac{\tau - \ell(\xi)}{\ell'(\xi)}.
  \end{equation*}
  In particular, if $\xi = \sqrt{2\ln(1 + \sqrt{2}\tau)}$, then
  \begin{equation*}
    |x| \leq
    \sqrt{2\ln(1 + \sqrt{2}\tau)}
    +
    R(\tau)
  \end{equation*}
  with $R(\tau) = \frac{
     \tau
    \paren{
      1 -
      \sqrt{\frac{\tau }{ \tau + 2\sqrt{2}} \ \frac{1}{\ln(1 + \sqrt{2}\tau)}}
    }
  }{
    \frac{1}{2\ln(1 + \sqrt{2}\tau)}
    \sqbrack{
      (1 + \sqrt{2}\tau)\{2\ln(1 + \sqrt{2}\tau) - 1\}
      +
      \ln(1 + \sqrt{2}\tau)
      +
      1
    }
    \sqrt{\frac{\tau}{\tau + 2\sqrt{2}}}
  }$, which satisfies the following:   $R(\tau) \leq 1/\sqrt{2}$, and  $\tau
  \mapsto R(\tau)$ is increasing, and $R(\tau) \sim \frac{1}{\sqrt{2}}\paren{1
  - \frac{1}{\sqrt{\ln\tau}}}$ as $\tau\to\infty$.
\end{lemma}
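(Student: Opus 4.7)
The plan is to exploit the convexity of $\ell$ on $[0,\infty)$, which was observed just after Lemma~\ref{lem:lower_bound_I}, together with the even symmetry $\ell(x)=\ell(|x|)$ that is built into the definition $\ell(x)^2 = (e^{x^2/2}-1)^3/(x^2 e^{x^2/2})$. For any convex function the tangent at $\xi>0$ lies below the function, so
\[
\ell(|x|) ~\geq~ \ell(\xi) + \ell'(\xi)\,(|x| - \xi).
\]
Combining with the hypothesis $\ell(x) = \ell(|x|)\leq \tau$, and noting $\ell'(\xi) > 0$ for $\xi>0$, division immediately yields the general bound $|x|\leq \xi + (\tau - \ell(\xi))/\ell'(\xi)$, which is the first claim.

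For the explicit bound I would substitute $\xi = \sqrt{2\ln(1+\sqrt{2}\tau)}$, chosen precisely so that $e^{\xi^2/2} = 1 + \sqrt{2}\tau$ and hence $e^{\xi^2/2}-1 = \sqrt{2}\tau$. With this choice $\ell(\xi)$ collapses to a clean function of $\tau$, namely
\[
\ell(\xi)^2 ~=~ \frac{(\sqrt{2}\tau)^3}{\xi^2 (1+\sqrt{2}\tau)} ~=~ \frac{(\sqrt{2}\tau)^3}{2\ln(1+\sqrt{2}\tau)(1+\sqrt{2}\tau)},
\]
and $\ell'(\xi)$ can be obtained by logarithmic differentiation of $\ell(x)^2$ and evaluation at this $\xi$. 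The resulting expressions for $\ell(\xi)$ and $\ell'(\xi)$, when substituted into $(\tau-\ell(\xi))/\ell'(\xi)$ and simplified, should match the stated formula for $R(\tau)$ up to routine algebra.

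To handle the three claimed properties of $R$, I would proceed as follows. The asymptotic $R(\tau)\sim \frac{1}{\sqrt 2}\bigl(1-1/\sqrt{\ln\tau}\bigr)$ as $\tau\to\infty$ is obtained by plugging in the leading-order approximations $1+\sqrt 2\tau\sim \sqrt 2\tau$, $\ln(1+\sqrt 2\tau)\sim \ln\tau$, and $\sqrt{\tau/(\tau+2\sqrt 2)}\to 1$ into the closed form; the numerator tends to $\tau(1-1/\sqrt{\ln\tau})$ and the denominator tends to $\tau\sqrt 2$. The uniform bound $R(\tau)\leq 1/\sqrt 2$ is then best obtained by proving monotonicity of $R$, because combined with the limit $1/\sqrt 2$ it yields the uniform bound for free. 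Monotonicity I would attempt by differentiating the explicit form, but the most natural route is to argue that $R(\tau)$ inherits monotonicity from the fact that the tangent approximation gap $\xi^\ast(\tau) - \xi(\tau)$ (where $\xi^\ast(\tau)=\ell^{-1}(\tau)$) is non-decreasing in $\tau$, via a monotonicity-of-curvature argument for $\ell$.

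The main obstacle is the uniform bound $R(\tau) \leq 1/\sqrt 2$. The closed form is a ratio of complicated expressions, so direct sign analysis of $R'(\tau)$ will be unpleasant. The cleaner route is probably the geometric one: show that for any convex $\ell$ with $\ell'(\xi) \to \infty$ as $\xi\to\infty$ and satisfying the growth $\ell(x)\sim e^{x^2/4}/|x|$ at infinity, the gap between $\ell^{-1}(\tau)$ and the tangent-based upper estimate is controlled by the curvature of $\ell$ at $\xi=\sqrt{2\ln(1+\sqrt 2\tau)}$, yielding the constant $1/\sqrt 2$ in the limit and monotonicity along the way. Once monotonicity is established, the uniform bound and the limiting value follow together.
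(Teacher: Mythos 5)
Your proposal follows the paper's proof exactly: you use the convexity of $\ell$ to bound via the tangent line at an arbitrary $\xi>0$ (the even symmetry $\ell(x)=\ell(|x|)$ you note is implicit in the paper), substitute the same $\xi=\sqrt{2\ln(1+\sqrt2\tau)}$ chosen so that $e^{\xi^2/2}-1=\sqrt2\tau$, read off $\ell(\xi)$ and $\ell'(\xi)$ to produce $R(\tau)$, obtain the asymptotic directly from the closed form, and derive the uniform bound $R\le 1/\sqrt2$ from monotonicity plus the limit. The paper is equally terse on the monotonicity of $R$ --- it simply asserts that it ``follows from the convexity of $\ell$'' --- so your gesture toward a curvature argument is no less rigorous than the source; just be aware that $R(\tau)$ is the tangent correction, not the actual gap $\ell^{-1}(\tau)-\xi(\tau)$, so monotonicity of the latter would not directly transfer.
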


With this, the proof of Lemma~\ref{lem:invert_I} is immediate.
\begin{proof}[Proof of Lemma~~\ref{lem:invert_I}]
  By Lemma~\ref{lem:lower_bound_I}, $I \leq \ell$. Lemma~\ref{lem:invert_ell}
  then implies the result.
\end{proof}

\subsubsection{Proof of Lemma~\ref{lem:lower_bound_I} }
\label{sec:proof_lowerboundI}

\begin{proof}[Proof of Lemma~\ref{lem:lower_bound_I}]
  Through a series of rewrites, we will show that
  \begin{equation}\label{eq:sech_rewrite}
    I(x)
    =
   \sqrt{\frac{2}{\pi}}\int_0^\infty \bracks{\rme^{\frac{1}{2}x^2 \mathrm{sech}^2(w)} - 1} \rmd w.
 \end{equation}
 Once this is proven, we use that
  \begin{equation*}
    \ln\paren{\rme^{\frac{1}{2} x^2 \mathrm{sech}(w)} - 1}
    \geq
    \ln(\rme^{x^2 /2} - 1) - \frac12 v(x)w^2
  \end{equation*}
  with $v(x) = \frac{x^2\rme^{x^2 / 2}}{(\rme^{x^2 / 2} - 1)}$ and that
  consequently
  \begin{equation*}
    I(x)
    \geq
    \sqrt{\frac{2}{\pi}}
    (\rme^{x^2 / 2} - 1)
    \int_0^\infty \rme^{-v(x)w^2} \rmd w
    =
    \sqrt{
      \frac{
        (\rme^{x^2 / 2} - 1)^{3}
      }{
        x^2\rme^{x^2 / 2}
      }
    } =
    \ell(x),
  \end{equation*}
  which is the claim. Now we show~\eqref{eq:sech_rewrite}.

  Writing out the defintion of the error function and a change of variables
  shows that
  \begin{align*}
    \int_0^{x} \rme^{u^2 / 2}\erf\paren{\frac{u}{\sqrt{2}}} \rmd u
    &=
    \frac{2}{\sqrt{\pi}}
    \int_0^x
    \int_0^{u/\sqrt{2}}\rme^{u^2 / 2 - v^2 }\rmd v \rmd u\\
    &=
    \sqrt{\frac{2}{\pi}}
    \int_0^x
    \int_0^{u}\rme^{u^2 / 2 - v^2 / 2 }\rmd v \rmd u.
  \end{align*}
  Use the change of variables $a = (u - v) / \sqrt{2}$ and
  $b = (u + v)/\sqrt{2}$. Its Jacobian is 1 and its inverse is given by
  $u = (a + b) / \sqrt{2}$ and $v = (b - a) / \sqrt{2}$. The integral becomes
  \begin{equation}
    I
    =
    \sqrt{\frac{2}{\pi}}
    \int_0^{x'}\int_a^{2x' - a}\rme^{ab}\rmd b\rmd a,
  \end{equation}
  where $x' = x/\sqrt{2}$. Use now the change of variables $u = \ln(\sqrt{b /
    a})$ and $v = \sqrt{ab}$. Then,
  \begin{align}
    I
    &=\label{eq:sech_last_step}
      \sqrt{\frac{2}{\pi}}
      \int_0^\infty\int_0^{x'\sqrt{1 - \tanh^2(u)}} \rme^{v^2}2v\rmd v\rmd u\\
    &=
      \sqrt{\frac{2}{\pi}}
      \int_0^\infty\bracks{\rme^{x'^2\mathrm{sech}^2(u)} - 1}\rmd u.
  \end{align}
  This concludes the proof.
\end{proof}

\subsubsection{Proof of Lemma~\ref{lem:invert_ell}}
\label{sec:proof_invert_ell}
\begin{proof}[Proof of Lemma~\ref{lem:invert_ell}]
  The function $\ell$ is convex and increasing. Convexity implies that any
  tangent line to $\ell$ is a lower bound on it. Hence, for any point $\xi$ and
  any $x$, we have that $\ell(\xi) + \ell'(\xi)(x - \xi) \leq \ell(x)$. This implies
  the first claim. The derivative of $\ell$ is
  \begin{equation*}
    \ell'(x)
    =
    \frac{\mathrm{sign}(x)}{x^2}
    \sqrt{\frac{\rme^{x^2 / 2} - 1}{\rme^{x^2 / 2}}}
    \paren{
      \rme^{x^2 / 2}(x^2 - 1)
      +
      \frac12 x^2
      +
      1
    }
  \end{equation*}
  Thus, using the first claim with $\xi = \sqrt{2\ln(1 + \sqrt{2}\tau)}$, we
  obtain that
  $|x| \leq \sqrt{2\ln(1 + \sqrt{2}\tau)} + \frac{\sqrt{2\ln(1 + \sqrt{2}\tau)}
    - \ell(\sqrt{2\ln(1 + \sqrt{2}\tau)})}{\ell'(\sqrt{2\ln(1 + \sqrt{2}\tau)})}$.
  Writing this last term explicitly, gives the expresion for $R$ in the claim.
  The fact that $R(\tau)$ is increasing follows from the convexity of $\ell$ and
  the asymptotic expression is readily obtained from the explicit form of $R$.
  This is all that was to be proven.
\end{proof}

\end{document}